 \newtheorem{theo}{Theorem}[section]
 \newtheorem{coro}[theo]{Corollary}
 \newtheorem{lemma}[theo]{Lemma}
 \newtheorem{prop}[theo]{Proposition}
 \theoremstyle{definition}
 \theoremstyle{remark}
 \newtheorem{remark}[theo]{Remark}
\newtheorem{example}[theo]{Example}
\newtheorem{problem}[theo]{Problem}
\newtheorem*{question*}{Question}
 \numberwithin{equation}{section}
\newcommand{\ZZ}{{\mathbb Z}}
\newcommand{\NN}{{\mathbb N}}
\newcommand{\RR}{{\mathbb R}}
\newcommand{\Ss}{{\mathscr S}}
\newcommand{\TT}{{\mathscr T}}
\newcommand{\xv}{{\mathbf x}}
\newcommand{\av}{{\mathbf a}}
\newcommand{\ein}{{\mathbf 1}}
\newcommand{\implyr}{\Longrightarrow}
\newcommand{\implylr}{\Longleftrightarrow}
\renewcommand{\phi}{\varphi}
\DeclareMathOperator{\sos}{SOS}
\begin{document}

\title{Sums of integers and sums of their squares}

\author{Detlev W.~Hoffmann}
\address{Fakult\"at f\"ur Mathematik,
Technische Universit\"at Dortmund,
D-44221 Dortmund,
Germany}
\email{detlev.hoffmann@math.tu-dortmund.de}

\thanks{The author is supported in part by DFG grant HO 4784/2-1
{\em Quadratic forms,
quadrics, sums of squares and Kato's cohomology in positive characteristic}.}

\date{}

\begin{abstract}
Suppose a positive integer $n$ 
is written as a sum of squares of
$m$ integers.  What can one say about the value $T$ of the sum of these $m$ integers
itself?  Which $T$ can be obtained if one considers all possible
representations of $n$ as a sum of squares of $m$ integers?  Denoting this
set of all possible $T$ by $\Ss_m(n)$,  
Goldmakher and Pollack have given a simple characterization of 
$\Ss_4(n)$ using elementary arguments.  Their result can be reinterpreted
in terms of Mordell's theory of representations of binary integral 
quadratic forms
as sums of squares of integral linear forms.   
Based on this approach, we characterize 
$\Ss_m(n)$ for all $m\leq 11$ and provide a few partial results for arbitrary $m$.
We also show how Mordell's results can be used to study variations of the original
problem where the sum of the integers is replaced by a linear
form in these integers.  In this way, we recover and generalize earlier 
results by Z.W.~Sun et.~al..

\end{abstract}

\subjclass[2010]{Primary: 11E25; Secondary 11D04, 11D09}
\keywords{Sums of squares of integers; sums of squares of linear forms; binary quadratic form}

\maketitle

\section{Introduction}
Let $\NN$ (resp. $\NN_0$) denote the set of all positive
(resp. nonnegative) integers.  For $m\in\NN$, we define
 $$\sos(m)=\{ a_1^2+a_2^2+\ldots +a_m^2\,|\, 
\mbox{$a_i\in\ZZ$ for $1\leq i\leq m$}\}$$
to be the set of all nonnegative integers that are sums of 
$m$ integer squares.
By Lagrange's 4-square theorem, we know that for $m\geq 4$, we have 
$\sos(m)=\NN_0$.
Now let $n\in \NN$, $T\in\ZZ$, and consider the following
system of diophantine equations:
\begin{equation}\label{system}
\begin{array}{rcrcccrcl}
x_1 & + & x_2 & + & \ldots & + & x_m & = & T\\[1ex] 
x_1^2 & + & x_2^2 & + & \ldots & + & x_m^2 & = & n
\end{array}
\end{equation}
To have a solution, some necessary conditions must be satisfied
by $n$ and $T$:
\begin{equation}\label{cond}
\begin{array}{rl}
\mbox{(1)} &  n\in \sos(m);\\
\mbox{(2)} &  n\equiv T\bmod 2;\\
\mbox{(3)} &  T^2\leq mn.
\end{array}
\end{equation}
Condition (1) is obvious and poses no restriction on $n\in\NN$
if $m\geq 4$.  Condition (2) follows from the fact that for each
$x\in\ZZ$, one has $x^2\equiv x\bmod 2$.  Condition (3) follows from
the Cauchy-Schwarz inequality.  Indeed, if 
$\xv =(x_1,x_2,\ldots,x_m)\in \ZZ^m$ is a solution of the above system,
and putting $\ein =(1,1,\ldots, 1)\in \ZZ^m$, then the Cauchy-Schwarz
inequality applied to the usual scalar product (dot product) implies
\begin{equation}\label{cs}
T^2=(\ein\cdot\xv)^2\leq (\ein\cdot\ein)(\xv\cdot\xv)=mn\,.
\end{equation}
We now fix $n\in \sos(m)$ and define the set of those $T$ for which
the above system has a solution:
$$\Ss_m(n)=\{ T\in\ZZ\,|\,\mbox{Eq.\ \ref{system} has a solution $\xv\in\ZZ^m$}\}\,.$$
Note that $\Ss_m(n)$ is symmetric in the sense that $T\in \Ss_m(n)$ if
and only if $-T\in\Ss_m(n)$.
Goldmakher and Pollack \cite[Th.\ 2]{gp} have determined $\Ss_4(n)$:
\begin{theo} 
$\Ss_4(n)=\{ T\in\ZZ\,|\,T\equiv n\bmod 2,\ 4n-T^2\in \sos(3)\}$.
\end{theo}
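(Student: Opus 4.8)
The plan is to exploit a scaled Hadamard change of variables that handles the linear constraint and the sum of squares simultaneously. I introduce the symmetric matrix
$$H=\begin{pmatrix} 1&1&1&1\\ 1&1&-1&-1\\ 1&-1&1&-1\\ 1&-1&-1&1\end{pmatrix},$$
which satisfies $H^{\top}H=4I_4$ and, being symmetric, $H^2=4I_4$ and $H^{-1}=\tfrac14 H$. Writing $\xv=(x_1,x_2,x_3,x_4)^{\top}$ and $(T,a,b,c)^{\top}=H\xv$, the first coordinate of $H\xv$ is exactly $T=x_1+x_2+x_3+x_4$, while $a,b,c$ are the three remaining signed combinations. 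The orthogonality relation $H^{\top}H=4I_4$ yields at once the key identity $T^2+a^2+b^2+c^2=4(x_1^2+x_2^2+x_3^2+x_4^2)$, that is,
$$4n-T^2=a^2+b^2+c^2 .$$

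For the inclusion $\subseteq$, suppose $T\in\Ss_4(n)$ is witnessed by some $\xv\in\ZZ^4$. The congruence $T\equiv n\bmod 2$ is condition (2) of \eqref{cond}. Reading off the integers $a,b,c$ from $H\xv$ and feeding them into the displayed identity exhibits $4n-T^2$ as an explicit sum of three integer squares, so $4n-T^2\in\sos(3)$. With the identity in hand this direction is immediate.

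For the reverse inclusion, assume $T\equiv n\bmod 2$ and $4n-T^2=a^2+b^2+c^2$ with $a,b,c\in\ZZ$. The natural candidate is $\xv=\tfrac14 H(T,a,b,c)^{\top}$; because the first column of $H$ sums to $4$ while the others sum to $0$ one gets $x_1+x_2+x_3+x_4=T$, and $H^{\top}H=4I_4$ gives $x_1^2+\cdots+x_4^2=n$. The only remaining point is that the four entries of $\xv$ be integers, i.e. that each of the four combinations $T\pm a\pm b\pm c$ occurring in the rows of $H$ be divisible by $4$; this is where I expect the only genuine work. The plan is a short residue analysis: reducing $4n-T^2=a^2+b^2+c^2$ modulo $4$ when $n$ is even, and modulo $8$ when $n$ is odd, forces $a,b,c$ to share the common parity of $T$. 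Consequently the four quantities $T\pm a\pm b\pm c$ differ pairwise by multiples of $4$ and are therefore mutually congruent modulo $4$. When $n$ is even one checks that their common residue is already $0$; when $n$ is odd it is $0$ or $2$, and in the latter case flipping the sign of a single one of $a,b,c$ (which leaves $a^2+b^2+c^2$ untouched) shifts the residue by $2$ and makes all four entries integral.

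Combining the two inclusions gives the asserted description. Note that $4n-T^2\in\sos(3)$ already forces $4n-T^2\ge 0$, so condition (3) of \eqref{cond} need not be listed separately. Conceptually, the ternary set $\sos(3)$ enters because the Hadamard transform converts the rank-$4$ problem cut out by the hyperplane $\ein\cdot\xv=T$ into the representation of $4n-T^2$ by the form $a^2+b^2+c^2$, the elementary precursor of the Mordell-type correspondence discussed in the introduction.
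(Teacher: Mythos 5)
Your proof is correct, but it takes a genuinely different route from the one in the paper. The paper never argues directly from the system of equations: it derives this statement (as Theorem \ref{m<8}(iii)) by quoting Mordell's theorem that a binary form $aX^2+2hXY+bY^2$ with $a,b,ab-h^2>0$ is a sum of four squares of integral linear forms if and only if $ab-h^2\in\sos(3)$ (Theorem \ref{thm-mordell}(iv)), and then converting representations of $[4,T,n]$ by four squares of linear forms back into solutions of Eq.~\ref{system} via Proposition \ref{equiv}(ii); the only delicate point there is the case where the leading coefficients of the linear forms are $(2,0,0,0)$, which the paper handles with the substitution $c_i=\frac12(b_1\pm b_2\pm b_3\pm b_4)$ --- the same Hadamard combinations you use, but in a different logical role. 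Your argument eliminates the appeal to Mordell's (nontrivial) theorem altogether: the identity $T^2+a^2+b^2+c^2=4n$ coming from $H^{\top}H=4I_4$ gives the forward inclusion at once, and the reverse inclusion follows from your residue analysis, which I checked and which does close: a sum of three squares $\equiv 0\bmod 4$ forces $a,b,c$ all even, and $\equiv 3\bmod 8$ forces them all odd, so $a,b,c$ share the parity of $T$; the four combinations $T\pm a\pm b\pm c$ then differ by quantities like $2(a+b)\equiv 0\bmod 4$, hence are mutually congruent mod $4$; for $n$ even the common residue is $0$ since $n-(T/2)^2=(a/2)^2+(b/2)^2+(c/2)^2$ makes $\frac{T+a+b+c}{2}$ even, and for $n$ odd a single sign flip shifts the residue by $2a\equiv 2\bmod 4$. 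What each approach buys: yours is elementary and self-contained, close in spirit to Goldmakher--Pollack's original Monthly argument that the paper set out to reinterpret; the paper's detour through Mordell is heavier for this one case but is precisely what generalizes --- it yields the $m=2,3$ criteria of Theorem \ref{m<8} and the description of $\Ss_{4,A}(n)$ for general linear forms $A$ in Section 5, settings where the Hadamard change of variables has no evident analogue.
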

In particular,  Legendre's 3-square theorem readily implies 
that for odd $n$, one always has
$1\in \Ss_4(n)$, thus giving a new proof of a conjecture of 
Euler\footnote{In their paper \cite{gp},
Goldmakher and Pollack mention Franz 
Lemmermeyer's earlier proof of Euler's conjecture on
mathoverflow.net/questions/37278/euler-and-the-four-squares-theorem
that also makes use of Legendre's 3-square theorem.}.

The purpose of the present paper is to study the sets $\Ss_m(n)$
in more detail also for other values of $m$.  In particular, 
we get complete descriptions of $\Ss_m(n)$ for $m\leq 11$: see
Proposition \ref{easy} for the case $m=1$ and the case $n\leq m$,
Theorem \ref{m<8} for the case $2\leq m\leq 7$, and
Theorem \ref{m=8-11} for the case $8\leq m\leq 11$.
Some further results such as the determination of $\Ss_m(n)$ in the case
$10\leq m<n\leq m+6$ (Corollary~\ref{m>9}) are also included.

For small values of $m$ we use classic results
by Mordell on representations of integral binary forms as sums of 
squares of integral linear forms.  This approach also allows a 
new interpretation of Goldmakher and Pollack's results on 
$\Ss_4(n)$, and it can be applied to variations of the above problem
studied by Z.-W.\ Sun et.\ al.\ in a series of papers
\cite{sun1}, \cite{sun2}, \cite{sun3}, \cite{sun4}.
There, one considers modified systems of equations for $m=4$
where the first equation in  Eq.~\ref{system} is replaced
by some other integral polynomial equation, i.e., one asks for
solutions of 
$$\begin{array}{rcl}
x_1^2+x_2^2+x_3^2+x_4^2 & = & n\\[1ex]
P(x_1,x_2,x_3,x_4) & = & T
\end{array}$$
where $P(x_1,x_2,x_3,x_4)\in\ZZ[x_1,x_2,x_3,x_4]$.
In analogy to the above notation and for given $n\in\NN$,
we denote the set of those $T\in\ZZ$ for which this new system
has a solution $\xv\in \ZZ^4$ by  $\Ss_{4,P}(n)$.
Using Mordell's results, we show how several of the results
by  Z.-W.\ Sun et.\ al.\ concerning $\Ss_{4,P}(n)$ for 
linear polynomials $P$ can be easily
recovered and extended.

\section{General results}
We start with some easy observations.
\begin{prop}\label{easy} Let $n,m\in\NN$.
\begin{enumerate}
\item[(1)] If $n=a^2$ for some $a\in\NN$, then 
$\Ss_1(n)=\{ \pm a\}$, otherwise $\Ss_1(n)=\emptyset$. 
\item[(2)] $\Ss_m(n)\subseteq \Ss_{m+1}(n)$.
\item[(3)] If $n\leq m$, then 
$\Ss_m(n)=\{ T\in\ZZ\,|\,n\equiv T\bmod{2},\ |T|\leq n\}$.
\end{enumerate}
\end{prop}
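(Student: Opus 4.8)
The statements (1) and (2) are immediate, so I would dispose of them first. For (1), setting $m=1$ in \eqref{system} forces $x_1=T$ together with $x_1^2=n$, so a solution exists exactly when $n$ is a perfect square $a^2$ (with $a\in\NN$), in which case $T=\pm a$; otherwise $\Ss_1(n)=\emptyset$. For (2), given any solution $\xv=(x_1,\dots,x_m)\in\ZZ^m$ of \eqref{system} realizing $T\in\Ss_m(n)$, the extended vector $(x_1,\dots,x_m,0)\in\ZZ^{m+1}$ is a solution with the same values of the two sums; appending a zero also shows $\sos(m)\subseteq\sos(m+1)$, so that $\Ss_{m+1}(n)$ is defined and contains $T$.

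For (3), I would write $R=\{T\in\ZZ\,|\,n\equiv T\bmod 2,\ |T|\leq n\}$ and prove the two inclusions separately. The inclusion $\Ss_m(n)\subseteq R$ can be established without using the hypothesis $n\leq m$: the parity condition $n\equiv T\bmod 2$ is already part of \eqref{cond}, and for the bound I would replace the Cauchy--Schwarz estimate \eqref{cs} by the sharper integrality observation that $|x_i|\leq x_i^2$ for every $x_i\in\ZZ$. Summing this over $i$ gives $|T|=|\sum_i x_i|\leq\sum_i|x_i|\leq\sum_i x_i^2=n$, which is exactly the bound $|T|\leq n$ and is stronger than \eqref{cs} precisely in the regime $n\leq m$.

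The reverse inclusion $R\subseteq\Ss_m(n)$ is where the hypothesis $n\leq m$ enters, and I would prove it by an explicit construction. By the symmetry of $\Ss_m(n)$ I may assume $T\geq 0$. I then put $p=\tfrac{1}{2}(n+T)$ and $q=\tfrac{1}{2}(n-T)$; the congruence $n\equiv T\bmod 2$ makes these integers, and $0\leq T\leq n$ makes them nonnegative. Taking the vector with $p$ coordinates equal to $1$, $q$ coordinates equal to $-1$, and the remaining $m-(p+q)=m-n$ coordinates equal to $0$, its coordinate sum is $p-q=T$ and its sum of squares is $p+q=n$, so $T\in\Ss_m(n)$. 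The only point requiring care --- and the sole place the hypothesis is used --- is that this vector has $m-n\geq 0$ zero coordinates available, which is guaranteed exactly by $n\leq m$; when $n>m$ the $\{-1,0,1\}$ construction no longer has room and the equality in (3) genuinely fails, so I do not expect any deeper obstacle beyond this bookkeeping.
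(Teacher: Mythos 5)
Your proof is correct and follows essentially the same route as the paper: appending a zero coordinate for (2), and for (3) realizing each admissible $T$ by a vector of $\tfrac{1}{2}(n+T)$ ones, $\tfrac{1}{2}(n-T)$ minus-ones, and $m-n$ zeros, which is exactly the paper's ``change the signs of the $1$'s'' construction. The only difference is that you spell out, via $|x_i|\leq x_i^2$, the upper bound $|T|\leq n$ that the paper dismisses as obvious --- a welcome bit of extra rigor, not a different approach.
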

\begin{proof} (1) is trivial.

(2) Any representation of $n$ by a a sum of $m$ squares
(with corresponding sum $T\in \Ss_m(n)$) becomes a representation 
by $m+1$ squares by adding $0^2$, hence $T=T+0\in \Ss_{m+1}(n)$.

(3) It is obvious that if $n\leq m$, then $\max\bigl(\Ss_m(n)\bigr)=n$ is
obtained by the representation $n=\sum_{i=1}^n 1^2$. So any $T\in \Ss_m(n)$
satisfies $|T|\leq n$. By changing the signs
of the $1$'s in the above sum as necessary, one gets that each $T\in\ZZ$ with 
$|T|\leq n$ and $n\equiv T\bmod{2}$ will be in $\Ss_m(n)$.
\end{proof}

In view of Eq.~\ref{cond}, it is a priori possible that
$T\in \Ss_m(n)$ with $T^2=mn$.  Of course, since $T\in\ZZ$, 
for this to hold, $mn$ must be a square.  More precisely, we have
the following.
\begin{prop}\label{max} Let $m,n\in\NN$ and $T\in\ZZ$. Then
$T\in \Ss_m(n)$ with $T^2=mn$ if and only if there exists
$a\in\NN$ with $n=ma^2$, in which case $T=\pm ma$.\end{prop}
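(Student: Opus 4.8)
The plan is to analyze the equality case in the Cauchy–Schwarz inequality~\eqref{cs}, which gives condition~(3). Recall that for real vectors, equality $(\ein\cdot\xv)^2=(\ein\cdot\ein)(\xv\cdot\xv)$ holds if and only if $\xv$ is a scalar multiple of $\ein$. So the key observation is that $T^2=mn$ forces the extremal configuration in Cauchy–Schwarz, pinning down the shape of any solution $\xv$ completely.

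First I would prove the ``if'' direction, which is the easy one: given $n=ma^2$ with $a\in\NN$, take $\xv=(a,a,\ldots,a)=a\ein$. Then $\xv\cdot\xv=ma^2=n$ and $\ein\cdot\xv=ma=T$, so $T=ma\in\Ss_m(n)$ and indeed $T^2=m^2a^2=m(ma^2)=mn$. The choice $\xv=-a\ein$ gives $T=-ma$, accounting for both signs.

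For the ``only if'' direction, suppose $\xv=(x_1,\ldots,x_m)\in\ZZ^m$ is a solution of~\eqref{system} with $T^2=mn$. I would invoke the equality case of Cauchy–Schwarz: since $(\ein\cdot\xv)^2=(\ein\cdot\ein)(\xv\cdot\xv)$, the vectors $\ein$ and $\xv$ must be linearly dependent over $\RR$, so $\xv=c\ein$ for some $c\in\RR$, i.e.\ $x_1=x_2=\cdots=x_m=c$. Because the $x_i$ are integers, $c\in\ZZ$; write $c=a$ (or $c=-a$ with $a\in\NN$ to match the sign convention). Then $n=\xv\cdot\xv=mc^2$, so setting $a=|c|$ we obtain $n=ma^2$ with $a\in\NN$ (note $a\neq 0$ since $n\in\NN$). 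Finally $T=\ein\cdot\xv=mc=\pm ma$.

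The only genuine subtlety is making the equality-case argument airtight over $\ZZ$ rather than $\RR$, but this is immediate once one knows $c$ is forced to be an integer because each $x_i=c$ is. The main thing to be careful about is the degenerate reasoning: one should confirm $n>0$ rules out $a=0$, so that $a\in\NN$ as claimed, and that both signs $T=\pm ma$ genuinely occur, which follows from the symmetry of $\Ss_m(n)$ noted before the statement (or directly from the two choices $\xv=\pm a\ein$). I expect no real obstacle here; the proposition is essentially a clean reading of when Cauchy–Schwarz is tight.
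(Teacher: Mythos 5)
Your proof is correct and follows essentially the same route as the paper: the constant vector $\xv=\pm a\ein$ for the ``if'' direction, and the equality case of the Cauchy--Schwarz inequality (Eq.~\ref{cs}) forcing $\xv$ to be a constant integer vector for the ``only if'' direction. Your additional remarks (integrality of the scalar, $a\neq 0$ from $n>0$, both signs occurring) are fine points the paper leaves implicit, so there is nothing to correct.
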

\begin{proof}
If $n=ma^2$, then $n=\sum_{i=1}^m a^2$ and 
$T=\sum_{i=1}^m a=ma\in \Ss_m(n)$.
Conversely, if $T\in\Ss_m(n)$ with $T^2=mn$, then there exists
$\xv\in\ZZ^m$ satisfying equality in the Cauchy-Schwarz inequality
Eq.~\ref{cs}, which implies that $\xv$ and the vector $\ein$
are linearly dependent, from which it follows that 
$\xv=(x_1,x_2,\ldots,x_m)=(a,a,\ldots,a)$
for some $a\in \ZZ$, and plugging this into Eq.~\ref{system}
implies that $n=ma^2$ and $T^2=mn=m^2a^2$.
\end{proof}
Because of this result, it makes sense to focus mainly on
those $T$ in $\Ss_m(n)$ with $T^2<mn$. 
For $n,m\in\NN$, we define the following sets of all $T\in\ZZ$ satisfying
condition (2) and strict inequality in (3) in Eq.~\ref{cond}:
$$\begin{array}{rcl}
\TT_m(n) & = & \{ T\in\ZZ\,|\,n\equiv T\bmod{2},\ T^2< mn\}\,,\\
\Ss_m'(n) & = & \{ T\in \Ss_m(n)\,|\,T^2<mn\}\,.
\end{array}$$
We say that $\Ss_m(n)$ is \emph{full} if $\Ss_m'(n)=\TT_m(n)$.
As an immediate consequence of Proposition \ref{max},
Eq.~\ref{cond} and the definition of fullness, we get the following.
\begin{coro}  Let $m,n\in\NN$.
$$\mbox{$\Ss_m(n)$ is full}\quad\implylr
\quad\left\{ \begin{array}{ll}
\Ss_m(n)=\TT_m(n)\cup\{ \pm ma\} & \mbox{if\ $\exists\,a\in\NN:\,n=ma^2$;}\\
\Ss_m(n)=\TT_m(n) & \mbox{otherwise.}
\end{array}\right.$$
\end{coro}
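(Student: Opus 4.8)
The plan is to start from the observation that every $T\in\Ss_m(n)$ automatically satisfies conditions (2) and (3) of Eq.~\ref{cond}, so that $n\equiv T\bmod 2$ and $T^2\leq mn$. This lets me partition $\Ss_m(n)$ according to whether the Cauchy--Schwarz bound is strict or attained: I write $\Ss_m(n)=\Ss_m'(n)\cup\{T\in\Ss_m(n)\mid T^2=mn\}$, where the first piece is $\Ss_m'(n)$ by its very definition. These two pieces are disjoint, since their defining conditions $T^2<mn$ and $T^2=mn$ are mutually exclusive; this disjointness is the one point I would record carefully, as it is what makes the final equivalence reversible.

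Next I would invoke Proposition \ref{max} to pin down the boundary piece $\{T\in\Ss_m(n)\mid T^2=mn\}$. By that result it equals $\{\pm ma\}$ when $n=ma^2$ for some $a\in\NN$, and is empty otherwise. Substituting this into the partition yields, in the two cases, the identities $\Ss_m(n)=\Ss_m'(n)\cup\{\pm ma\}$ and $\Ss_m(n)=\Ss_m'(n)$ respectively. Note that these identities hold unconditionally, that is, regardless of whether $\Ss_m(n)$ is full.

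With these identities in hand, both directions of the equivalence are immediate. For the forward implication I replace $\Ss_m'(n)$ by $\TT_m(n)$ using the definition of fullness, obtaining exactly the two displayed right-hand sides. For the converse I read the decomposition backwards: every element of $\TT_m(n)$ satisfies $T^2<mn$, while each $\pm ma$ satisfies $(ma)^2=m^2a^2=mn$, so in the case $n=ma^2$ the set $\{\pm ma\}$ is disjoint from $\TT_m(n)$, and intersecting the claimed right-hand side with the condition $T^2<mn$ recovers precisely $\TT_m(n)$; in the generic case the same intersection is trivial. In either case this gives $\Ss_m'(n)=\TT_m(n)$, which is fullness. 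The argument is essentially bookkeeping, so I anticipate no genuine obstacle beyond keeping track of the strict-versus-attained dichotomy that governs the disjointness.
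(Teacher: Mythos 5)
Your proposal is correct and follows exactly the route the paper intends: the paper states this corollary as an immediate consequence of Proposition \ref{max}, Eq.~\ref{cond} and the definition of fullness, and your argument is precisely the bookkeeping that makes it immediate — partition $\Ss_m(n)$ into $\Ss_m'(n)$ and the boundary set $\{T\in\Ss_m(n)\mid T^2=mn\}$, identify the latter via Proposition \ref{max}, and observe that the resulting unconditional identities make both directions of the equivalence follow from the disjointness of the two pieces. No gaps; nothing further is needed.
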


\begin{coro}\label{small-n}
Let $n,m\in\NN$ with $n\leq m$.  Then
$\Ss_m(n)$ is full if and only if $m \leq n+4+\frac{4}{n}$.
In particular, $\Ss_m(n)$ is full whenever $n\leq m\leq 8$. 
\end{coro}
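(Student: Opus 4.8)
The plan is to reduce the fullness condition to a single integer inequality by feeding the explicit description of $\Ss_m(n)$ from the regime $n\leq m$ into the definitions of $\Ss_m'(n)$ and $\TT_m(n)$. By Proposition \ref{easy}(3), for $n\leq m$ we have $\Ss_m(n)=\{T\in\ZZ\,|\,T\equiv n\bmod 2,\ |T|\leq n\}$, so that
$$\Ss_m'(n)=\{T\in\ZZ\,|\,T\equiv n\bmod 2,\ |T|\leq n,\ T^2<mn\}\,.$$
Since the inclusion $\Ss_m'(n)\subseteq\TT_m(n)$ holds for all $m,n$ (every element of $\Ss_m(n)$ already satisfies the parity condition), fullness is equivalent to the reverse inclusion $\TT_m(n)\subseteq\Ss_m'(n)$. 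Comparing the two descriptions, the \emph{only} way a $T\in\TT_m(n)$ can fail to lie in $\Ss_m'(n)$ is by violating the bound $|T|\leq n$, whence
$$\TT_m(n)\setminus\Ss_m'(n)=\{T\in\ZZ\,|\,T\equiv n\bmod 2,\ |T|>n,\ T^2<mn\}\,.$$

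First I would exploit the parity constraint: since $T\equiv n\bmod 2$, the smallest value with $|T|>n$ is $|T|=n+2$. Hence the difference set above is empty if and only if this extremal candidate already fails the strict inequality, i.e. if and only if $(n+2)^2\geq mn$. I would then rewrite this criterion as
$$(n+2)^2\geq mn\iff n^2+4n+4\geq mn\iff m\leq n+4+\tfrac{4}{n}\,,$$
dividing by $n>0$ in the last step; this is precisely the asserted condition for fullness. The one place calling for a moment of care is the boundary case $(n+2)^2=mn$: there $T=n+2$ satisfies $T^2=mn$, so it lies in neither $\TT_m(n)$ nor $\Ss_m'(n)$, which confirms that equality $m=n+4+\frac{4}{n}$ still yields a full $\Ss_m(n)$.

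For the final assertion I would bound $f(n):=n+4+\frac{4}{n}$ from below by the arithmetic--geometric mean inequality: $n+\frac{4}{n}\geq 2\sqrt{4}=4$, with equality exactly at $n=2$, so $f(n)\geq 8$ for every $n\in\NN$. Consequently any $m$ with $m\leq 8$ automatically satisfies $m\leq 8\leq f(n)$, and by the criterion just established $\Ss_m(n)$ is full whenever $n\leq m\leq 8$. I expect no genuine obstacle in this argument; the only thing to track with discipline is the interaction between the parity condition and the strict-versus-nonstrict inequality at the critical value $|T|=n+2$, which is exactly what pins down the boundary of the range.
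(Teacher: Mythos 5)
Your proposal is correct and follows essentially the same route as the paper: it uses the explicit description of $\Ss_m(n)$ for $n\leq m$ from Proposition \ref{easy}, reduces fullness to the requirement that the smallest parity-admissible value beyond $n$, namely $|T|=n+2$, satisfies $(n+2)^2\geq mn$, and then rewrites this as $m\leq n+4+\frac{4}{n}$, with the bound $n+\frac{4}{n}\geq 4$ giving the final claim for $m\leq 8$. The extra care you take at the boundary $(n+2)^2=mn$ and the explicit AM--GM step are details the paper leaves implicit, but the argument is the same.
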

\begin{proof}  It follows readily from Prop.~\ref{easy}(2)
and the definition of fullness
that $\Ss_m(n)$ is full iff $|T|\leq n$ for all 
$T\in\ZZ$ with $n\equiv T\bmod{2}$
and $T^2< mn$,  iff $(n+2)^2\geq mn$ iff 
$m \leq n+4+\frac{4}{n}$.  Since $n+4+\frac{4}{n}\geq 8$ for all
$n\in\NN$, it follows that $\Ss_m(n)$ is full whenever $n\leq m\leq 8$. 
\end{proof}

\section{Mordell's results on sums of squares of linear forms}
Let us fix $n,m\in\ZZ$. Working in $\ZZ[X,Y]$,
the polynomial ring in two variables over the integers,
one readily finds that having a solution $\xv=(x_1,\ldots,x_m)\in\ZZ^m$
of Eq.~\ref{system} is equivalent to having an equation
\begin{equation}\label{linear}
\sum_{i=1}^m(X+x_iY)^2=mX^2+2TXY+nY^2
\end{equation} 
in $\ZZ[X,Y]$. 
We will denote the binary integral quadratic form
$mX^2+2TXY+nY^2$ by $[m,T,n]$ for short.
Hence, the form
$[m,T,n]$ can be written as a sum of $m$ squares of 
integral linear forms.  Note that this in turn means that
the form $[m,T,n]$ must be positive semi-definite.
We define the determinant $\Delta$ of $[m,T,n]$ by
$$\Delta (m,T,n)=mn-T^2\,,$$
the determinant of the Gram matrix
$\begin{pmatrix} m & T\\ T & n\end{pmatrix}$ of the associated
bilinear form. 
By the Hurwitz criterion (and since $m,n\in\NN$), the form
$[m,T,n]$ is positive semi-definite 
iff $\Delta (m,T,n)=mn-T^2\geq 0$, 
i.e., $mn\geq T^2$, which is a different way of deriving
the necessary condition (3) in Eq.~\ref{cond}.

In \cite{mo1}, \cite{mo2}, Mordell considered the following 
more general problem.
Given $a,h,b\in\ZZ$ and $m\in\NN$, what are necessary and sufficient 
conditions so that $[a,h,b]$ can be written as a sum of 
$m$ squares of integral linear forms, i.e. so that there are
$a_i,b_i\in \ZZ$, $1\leq i\leq m$, with
\begin{equation}\label{eq-mordell1}
\sum_{i=1}^m(a_iX+b_iY)^2=aX^2+2hXY+bY^2\,?
\end{equation} 

The necessary and sufficient criteria found by Mordell
for the solvability of Eq.~\ref{eq-mordell1}, when applied to
$[a,h,b]=[m,T,n]$, thus become necessary conditions for
the solvability of Eq.~\ref{linear}.
For $2\leq m\leq 7$ 
we can say more (recall that the trivial case $m=1$ has been dealt with
in Prop.~\ref{easy}).
\begin{prop}\label{equiv}
Let $m,n\in\NN$ and $T\in\ZZ$.
\begin{enumerate}
\item[(i)]  If $m=2,3$, then $T\in\Ss_m(n)$ if and only if
$[m,T,n]$ is a sum of $m$ squares of integral linear forms.
\item[(ii)] If $4\leq m\leq 7$, then $T\in\Ss_m(n)$ if and only if
$T\equiv n\bmod 2$ and
$[m,T,n]$ is a sum of $m$ squares of integral linear forms.
\end{enumerate}
\end{prop}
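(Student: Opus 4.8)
The \emph{only if} directions are immediate and common to both parts: if $T\in\Ss_m(n)$, then Eq.~\ref{linear} exhibits $[m,T,n]=\sum_{i=1}^m(X+x_iY)^2$ as a sum of $m$ squares, and from Eq.~\ref{system} we get $n=\sum x_i^2\equiv\sum x_i=T\pmod 2$, i.e.\ condition (2) of Eq.~\ref{cond}. So the work lies entirely in the converse. For both parts the plan is to start from an arbitrary representation $\sum_{i=1}^m(a_iX+b_iY)^2=[m,T,n]$ with $a_i,b_i\in\ZZ$, and to massage it into one in which \emph{every} leading coefficient equals $1$; such a representation is exactly Eq.~\ref{linear} and hence certifies $T\in\Ss_m(n)$. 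Throughout I will use freely that replacing $(a_i,b_i)$ by $(-a_i,-b_i)$ leaves the square $(a_iX+b_iY)^2$ unchanged and that permuting the summands is harmless. Comparing coefficients of $X^2$ gives the one constraint I will exploit, $\sum_{i=1}^m a_i^2=m$.

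For part (i) the argument is purely numerical. Since $m\in\{2,3\}$ and $\sum a_i^2=m$ is a sum of $m$ nonnegative squares, no $a_i$ can have $|a_i|\ge 2$, for a single such term would already contribute $\ge 4>m$. Hence every $a_i=\pm1$, and after the sign adjustment above all $a_i=1$, which is Eq.~\ref{linear}. The parity condition then falls out automatically, consistent with the fact that (i) carries no parity hypothesis.

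For part (ii) I would first classify the leading-coefficient vectors. With $4\le m\le 7$, the equation $\sum_{i=1}^m a_i^2=m$ forces either all $|a_i|=1$, or exactly one $|a_i|=2$ together with three zeros and $m-4$ entries equal to $\pm1$: two entries of absolute value $\ge 2$ would contribute $\ge 8>m$, and one entry $\ge 3$ would contribute $\ge 9>m$. This is precisely where the bound $m\le 7$ is used, since for $m=8$ the configuration $(2,2,0,\dots,0)$ becomes available. The first case is finished as in (i). In the second case, after signs and a permutation I may assume $a_1=2$, that the three zero slots carry $Y$-coefficients $p,q,r$, and write $b_1$ for the $Y$-coefficient of the doubled slot. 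Leaving the $m-4$ forms of leading coefficient $1$ untouched, I operate only on the $4\times2$ block with columns $(2,0,0,0)^{\mathsf T}$ and $(b_1,p,q,r)^{\mathsf T}$. The key idea is to left-multiply this block by the normalized Hadamard matrix
$$R=\tfrac12\begin{pmatrix}1&1&1&1\\ 1&1&-1&-1\\ 1&-1&1&-1\\ 1&-1&-1&1\end{pmatrix},$$
which is orthogonal, hence preserves the $2\times2$ Gram matrix of the block and so the quadratic form it represents. Since $R(2,0,0,0)^{\mathsf T}=(1,1,1,1)^{\mathsf T}$, the transformed block has all four leading coefficients equal to $1$, while its new $Y$-column $c=R(b_1,p,q,r)^{\mathsf T}$ has unchanged norm.

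The remaining point to verify is that $c$ is \emph{integral}, and this is exactly where the parity hypothesis enters: each entry of $R(b_1,p,q,r)^{\mathsf T}$ is $\tfrac12$ times a signed sum $\pm b_1\pm p\pm q\pm r$, and all four such sums share the parity of $b_1+p+q+r$. A short bookkeeping step shows that, given the structure above, the global condition $T\equiv n\pmod 2$ is equivalent to $b_1+p+q+r$ being even, which makes every entry of $c$ an integer. Reassembling the untouched forms with the four new forms $X+c_jY$ yields $[m,T,n]=\sum(X+\tilde x_iY)^2$, proving $T\in\Ss_m(n)$. I expect no deep obstacle here; the main care is organizational—confirming that three zero slots are always present to pair with the doubled slot, and that the parity accounting matches so that the Hadamard transform lands in $\ZZ^4$.
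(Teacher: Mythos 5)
Your proof is correct and follows essentially the same route as the paper: the classification of leading-coefficient vectors forced by $\sum_{i=1}^m a_i^2=m\leq 7$ is identical, and your normalized Hadamard matrix $R$ is precisely the paper's explicit substitution $c_1=\tfrac12(b_1+b_2+b_3+b_4)$, $c_2=\tfrac12(b_1+b_2-b_3-b_4)$, etc., with the same parity bookkeeping (reducing $T\equiv n\bmod 2$ to the evenness of $b_1+b_2+b_3+b_4$) guaranteeing integrality. The only difference is presentational—you phrase the key step as an orthogonal transformation of the $4\times 2$ block of linear forms, while the paper writes the new coefficients out directly and verifies $\sum c_i=T$, $\sum c_i^2=n$ by hand.
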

\begin{proof}
If $T\in\Ss_m(n)$, then as remarked above,  $[m,T,n]$ is a sum of
$m$ squares of integral linear forms.  Note also that 
$T\equiv n\bmod 2$.

Conversely, suppose that $2\leq m\leq 8$, and 
in addition that $n\equiv T\bmod 2$
in the case $m\geq 4$, and assume that 
\begin{equation}\label{eq-mordell2}
[m,T,n]=\sum_{i=1}^m(a_iX+b_iY)^2\end{equation}
with $a_i,b_i\in\ZZ$.
We may assume that $a_i\geq 0$ for $1\leq i\leq m$
after changing signs
of both $a_i$ and $b_i$ whenever necessary.
Since $m=\sum_{i=1}^ma_i^2$, we see that for $m=2,3$ the only solution
is $a_1=\ldots=a_m=1$, which means we have a solution of 
Eq.~\ref{linear}, hence $T\in \Ss_m(n)$.

Now if $m=4$, our solution of Eq.~\ref{eq-mordell2} 
satisfies in particular $m=\sum_{i=1}^ma_i^2$, 
and after permuting the summands if necessary, 
there are two possibilities:
 $(a_1,\ldots ,a_4)=(1,1,1,1)$ or
$(a_1,\ldots ,a_4)=(2,0,0,0)$.
In the case $(1,1,1,1)$, this
again implies that we get a solution of 
Eq.~\ref{linear}, hence $T\in \Ss_4(n)$.

In the case
$(2,0,0,0)$, we get $4b_1=2T$ which 
necessarily implies that we have $n\equiv T\equiv 0\bmod 2$.
But then $n=\sum_{i=1}^4b_i^2\equiv \sum_{i=1}^4\pm b_i\equiv 0\bmod 2$.
We put
$$\begin{array}{rclcrcl}
c_1 & = & \frac{1}{2}(b_1+b_2+b_3+b_4) & , & 
c_2 & = & \frac{1}{2}(b_1+b_2-b_3-b_4) \\[1ex]
c_3 & = & \frac{1}{2}(b_1-b_2+b_3-b_4) & , & 
c_4 & = & \frac{1}{2}(b_1-b_2-b_3+b_4)
\end{array}$$
and we have $c_i\in\ZZ$, $\sum_{i=1}^4c_i=2b_1=T$ and 
$\sum_{i=1}^4c_i^2=\sum_{i=1}^4 b_i^2=n$ and therefore
$T\in\Ss_4(n)$.

If $5\leq m\leq 7$, our solution of Eq.~\ref{eq-mordell2} 
satisfies in particular $m=\sum_{i=1}^ma_i^2$, 
and after permuting the summands if necessary, 
there are two possibilities:
$(a_1,\ldots,a_m)=(1,1,\ldots,1)$ or
$(a_1,\ldots,a_m)=(2,0,0,0,1,\ldots,1)$.

In the case $(1,\ldots,1)$, we conclude as before that $T\in \Ss_m(n)$.
Now suppose that we are in the case $(2,0,0,0,1,\ldots,1)$.  Then
$T=2b_1+\sum_{i=5}^mb_i$.  Now
$$\sum_{i=5}^mb_i\equiv T\equiv n\equiv\sum_{i=1}^mb_i^2 \equiv
\sum_{i=1}^m\pm b_i\bmod 2$$
which implies that $\sum_{i=1}^4\pm b_i$ is even.
Hence, with the same $c_1,\ldots,c_4$ as above and with $c_i=b_i$
for $5\leq i\leq m$,
we have $c_i\in\ZZ$, $\sum_{i=1}^mc_i=2b_1+\sum_{i=5}^mb_i=T$ and 
$\sum_{i=1}^mc_i^2=\sum_{i=1}^m b_i^2=n$ and therefore
$T\in\Ss_m(n)$.
\end{proof}

We now turn to Mordell's results on solving Eq.~\ref{eq-mordell1} for
given $aX^2+2hXY+bY^2\in\ZZ[X,Y]$. Let us first deal with some
obvious cases. Recall that the solvability requires $a,b,\Delta=ab-h^2\geq 0$,
which we henceforth assume. Also, if we assume in addition $aX^2+2hXY+bY^2\neq 0$,
then we cannot have $a=b=0$, in which case we may assume $a>0$.

\begin{prop}\label{obvious}  
Let $aX^2+2hY^2+bY^2\in\ZZ[X,Y]\setminus\{ 0\}$ with $a>0$, $b,\Delta\geq 0$.
\begin{enumerate}
\item[(a)] If $\Delta=0$, then the following are equivalent:
\begin{enumerate}
\item[(i)] Eq.~\ref{eq-mordell1} is solvable;
\item[(ii)] $a\in\sos(m)$;
\item[(iii)] There exist $r,s,t\in\ZZ$ with 
$0<t\in\sos (m)$ and $aX^2+2hXY+bY^2=t(rX+sY)^2$.
\end{enumerate}
\item[(b)] Eq.~\ref{eq-mordell1} is solvable for $m=1$ iff
$\Delta=0$ and $a\in\sos(1)$.
\end{enumerate}
\end{prop}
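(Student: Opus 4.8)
The plan is to prove part (a) as a cycle of implications (i)$\Rightarrow$(ii)$\Rightarrow$(iii)$\Rightarrow$(i), and then to deduce part (b) from it together with the observation that a single square is automatically a form of determinant zero. Two of the three implications are immediate. If Eq.~\ref{eq-mordell1} holds, comparing the coefficients of $X^2$ gives $a=\sum_{i=1}^m a_i^2\in\sos(m)$, which is (ii). Conversely, given (iii), writing $t=\sum_{i=1}^m c_i^2$ and expanding $t(rX+sY)^2=\sum_{i=1}^m(c_irX+c_isY)^2$ exhibits a solution of Eq.~\ref{eq-mordell1} with $a_i=c_ir$ and $b_i=c_is$, which is (i).

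The substantive implication is (ii)$\Rightarrow$(iii), and here I would first exploit the hypothesis $\Delta=0$, i.e. $ab=h^2$, to factor the form explicitly. Put $g=\gcd(a,h)$ and write $a=ga'$, $h=gh'$ with $\gcd(a',h')=1$. Then $b=h^2/a=gh'^2/a'$, and since $b\in\ZZ$ and $\gcd(a',h'^2)=1$ one gets $a'\mid g$. Hence $t:=g/a'\in\NN$ (note $t>0$ because $a>0$), and a direct computation shows $aX^2+2hXY+bY^2=t(a'X+h'Y)^2$ with $a=ta'^2$. This is precisely the representation demanded in (iii), provided one knows that $t\in\sos(m)$.

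The main obstacle is therefore the number-theoretic step that $\sos(m)$ is stable under dividing out squares: if $ta'^2\in\sos(m)$ then $t\in\sos(m)$. For $m\geq 4$ this is vacuous, since $\sos(m)=\NN_0$ by Lagrange's theorem. For $m=1$ it is the fact that a rational number whose square is an integer is itself an integer. For $m=2$ it follows from the description of sums of two squares by the parity of the exponents of the primes congruent to $3\bmod 4$, a parity that is unchanged upon multiplying by a square. For $m=3$ it follows from Legendre's three-square theorem: an odd square is $\equiv 1\bmod 8$ and multiplication by $4$ preserves membership in $\sos(3)$, so the obstruction $4^k(8\ell+7)$ is invariant under multiplication by a square. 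Applying this closure property with the integer $a=ta'^2\in\sos(m)$ supplied by (ii) yields $t\in\sos(m)$ and completes (ii)$\Rightarrow$(iii).

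Finally, part (b) falls out with little extra work. If Eq.~\ref{eq-mordell1} is solvable for $m=1$, then $a=a_1^2$, $h=a_1b_1$, $b=b_1^2$, so $\Delta=a_1^2b_1^2-(a_1b_1)^2=0$ and $a=a_1^2\in\sos(1)$. Conversely, if $\Delta=0$ and $a\in\sos(1)$, then part (a) applied with $m=1$ gives solvability through the implication (ii)$\Rightarrow$(i). I expect the only genuinely delicate point in the whole argument to be the square-stability of $\sos(3)$ via the $4^k(8\ell+7)$ characterization; everything else is bookkeeping around the factorization of a determinant-zero form.
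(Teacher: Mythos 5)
Your proof is correct and follows essentially the same route as the paper: the trivial implications (i)$\Rightarrow$(ii) and (iii)$\Rightarrow$(i), the factorization of the determinant-zero form as $t(rX+sY)^2$ with $a=tr^2$, and the stability of $\sos(m)$ under dividing out squares, with part (b) deduced from (a) plus the computation $\Delta=r^2s^2-(rs)^2=0$. The only difference is one of detail: where the paper asserts the factorization ("we can find $r,t,s$\ldots") and the square-stability ("well known or easy to check"), you supply explicit proofs — the $\gcd(a,h)$ construction and the case-by-case check via Lagrange, Fermat, and Legendre — both of which are sound.
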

\begin{proof}
In (a), ihe implications (i)$\implyr$(ii) and (iii)$\implyr$(i) are trivial.
If (ii) holds, then $ab=h^2$ implies that we can find $r,t\in\NN$, $s\in\NN_0$
with $a=r^2t$, $b=s^2t$ and $h=\pm rst$, and thus
$aX^2+2hXY+bY^2=t(rX\pm sY)^2$.  But then it is well known (or easy to check) that
$0<a\in\sos(m)$ iff $0<t\in\sos(m)$, which yields (iii).

Now if Eq.~\ref{eq-mordell1} is solvable for $m=1$, then
$aX^2+2hXY+bY^2=(rX+sY)^2$ for some $r,s\in\ZZ$.  Hence 
$\Delta=ab-h^2=r^2s^2-(rs)^2=0$.  (b) follows now readily from (a).
\end{proof}

Before we state Mordell's results, we introduce some further notations.
For $n\in\ZZ\setminus\{ 0\}$ and any prime number $p$, we denote by
$v_p(n)\in\NN_0$ the usual $p$-adic value of $n$, and by $n_p$ the
$p$-free part of $n$, so that
$n=p^{v_p(n)}n_p$ where $n_p\in\ZZ$ with $\gcd(p,n_p)=1$.
If $p$ does not divide $n$, then $\bigl(\frac{n}{p}\bigr)$ denotes
the usual Legendre symbol:
$$\left(\frac{n}{p}\right)=\left\{\begin{array}{rl}
1 & \mbox{if $n$ is a quadratic residue modulo $p$,}\\[1ex]
-1 & \mbox{otherwise.}
\end{array}\right.$$

Let now $aX^2+2hXY+bY^2\in\ZZ[X,Y]$ with $a,b,\Delta=ab-h^2>0$.
Let $d=\gcd (a,h,b)$ and $\widetilde{d}=\gcd (a,2h,b)$.  Then
$\widetilde{d}\in\{ d,2d\}$. 

\begin{theo}[Mordell \cite{mo1}, \cite{mo2}]\label{thm-mordell}
Let $aX^2+2hXY+bY^2\in\ZZ[X,Y]$ with $a,b,\Delta=ab-h^2>0$. 
\begin{enumerate}
\item[(i)]  Eq.~\ref{eq-mordell1} is solvable for $m=2$ iff
$\Delta\in\sos (1)$ and $d\in\sos (2)$.
\item[(ii)] Eq.~\ref{eq-mordell1} is solvable for $m=3$ iff all
of the following 
conditions are satisfied:
\begin{itemize}
\item[($\alpha$)] If $v_2(d)$ is odd then $\widetilde{d}=d$.
\item[($\beta$)] For any odd prime number $p$ with odd $v_p(\Delta)$ and
even $v_p(a)$, one has
$$\left(\frac{-a_p}{p}\right)=1\,;$$
\item[($\gamma$)] For any odd prime number $p$ with odd $v_p(\Delta)$ and
odd $v_p(a)$, one has
$$\left(\frac{-a_p\Delta_p}{p}\right)=1\,;$$
\item[($\delta$)] For any odd prime number $p$ with even $v_p(\Delta)$ and
odd $v_p(d)$, one has
$$\left(\frac{-\Delta_p}{p}\right)=1\,.$$
\end{itemize}
\item[(iv)] Eq.~\ref{eq-mordell1} is solvable for $m=4$ iff
$\Delta\in\sos (3)$.
\item[(v)] Eq.~\ref{eq-mordell1} is solvable for $m=5$ (and thus for
all $m\geq 5$).
\end{enumerate}
\end{theo}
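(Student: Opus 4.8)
The plan is to translate the problem into the language of integral lattices and then run a local--global argument. Solvability of Eq.~\ref{eq-mordell1} says exactly that the rank-two lattice $L$ with Gram matrix $\begin{pmatrix} a & h\\ h & b\end{pmatrix}$ admits an isometric embedding into the standard lattice $\ZZ^m$ (Gram matrix $I_m$), and this property depends only on the $GL_2(\ZZ)$-equivalence class of $[a,h,b]$: a unimodular substitution in $X,Y$ carries one sum-of-squares representation to another. So I would first replace $[a,h,b]$ by a Gauss-reduced form. The decisive structural input is that for $m\le 8$ the genus of $\ZZ^m$ has class number one. Combined with the standard theorem that a lattice is represented by a genus if and only if it is represented over $\RR$ and over every $\ZZ_p$, this shows that $L$ embeds in $\ZZ^m$ over $\ZZ$ precisely when it embeds over $\RR$ and over each $\ZZ_p$. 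Every part of the theorem then reduces to a purely local computation, and positive definiteness ($\Delta>0$) disposes of the real place for all $m\ge 2$.

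At an odd prime $p$ one splits both the unimodular lattice $\ZZ_p^m$ and the binary form $[a,h,b]$ into Jordan constituents and compares discriminants and Hasse--Witt invariants. For $m=2,3$ this yields the quadratic-residue conditions $(\beta)$--$(\delta)$, organized by the parities of $v_p(\Delta)$, $v_p(a)$, $v_p(d)$ (together with the perfect-square and $\sos(2)$ conditions for $m=2$), while the prime $2$ contributes condition $(\alpha)$ for $m=3$. For $m\ge 4$ the rank is large enough that $I_m$ represents $[a,h,b]$ over $\ZZ_p$ at every odd $p$ with no further restriction, so the odd-prime conditions become vacuous and part (iv) collapses to a single $2$-adic (plus real) obstruction, which is the classical assertion that $\Delta$ is a sum of three integer squares, i.e.\ that $\Delta$ is not of the form $4^k(8\ell+7)$.

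For part (v) I would give two routes. The direct one: granting the one-class-genus fact, it suffices to verify that $I_5$ represents every binary $\ZZ_2$-lattice, after which local solvability holds at every place and the genus argument yields the representation over $\ZZ$; Proposition~\ref{easy}(2) then extends this to all $m\ge 5$ by adjoining zero linear forms. Alternatively, having (i)--(iv) in hand, one can argue elementarily by subtracting a single square $(pX+qY)^2$ from a reduced $[a,h,b]$: the remainder is $[a-p^2,\,h-pq,\,b-q^2]$, whose determinant is $\Delta-Q(p,q)$ with $Q(p,q)=bp^2-2hpq+aq^2$, and it is a sum of four squares by (iv) as soon as $\Delta-Q(p,q)$ is a sum of three squares and the remainder is positive semidefinite. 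One then chooses $(p,q)$ so that $\Delta-Q(p,q)$ avoids the exceptional residues $4^k(8\ell+7)$.

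The hard part is the $2$-adic analysis, which underlies both routes. Jordan decompositions over $\ZZ_2$ are delicate---unary and binary unimodular pieces of several types, scalings by powers of $2$, and the oddity invariant all intervene---so establishing that $I_4$ represents $[a,h,b]$ over $\ZZ_2$ exactly when $\Delta\in\sos(3)$, and that $I_5$ represents all such forms unconditionally, requires a careful case distinction according to $v_2(\Delta)$ and the $2$-adic Jordan type of $[a,h,b]$. In the elementary route the same content reappears as the combinatorial claim that the values $Q(p,q)=bp^2-2hpq+aq^2$ realize enough residues modulo small powers of $2$ to let $\Delta-Q(p,q)$ dodge every class $4^k(8\ell+7)$; the conceptual reason (v) carries no side conditions is precisely that the five unimodular summands leave enough room to absorb every $2$-adic possibility.
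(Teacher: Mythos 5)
First, a point of reference: the paper itself gives \emph{no} proof of Theorem~\ref{thm-mordell}. It is quoted from Mordell \cite{mo1}, \cite{mo2}, with only the remark that the formulation of the $m=3$ case has been streamlined; so your attempt can only be measured against Mordell's original arguments (elementary descent on binary forms), not against anything carried out in the paper. Your genus-theoretic plan is the natural modern alternative, and its skeleton is sound: solvability of Eq.~\ref{eq-mordell1} is equivalent to an isometric embedding of the rank-two lattice with Gram matrix $\begin{pmatrix} a & h\\ h & b\end{pmatrix}$ into $\ZZ^m$; the genus of $\ZZ^m$ has class number one for $m\leq 8$; representation by a genus is equivalent to representation over $\RR$ and over every $\ZZ_p$; positive definiteness disposes of the real place; and your claim that odd primes impose no condition once $m\geq 4$ is correct (over $\ZZ_p$ with $p$ odd, $I_4$ is isometric to an orthogonal sum of two hyperbolic planes, which represents every binary $\ZZ_p$-lattice). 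The determinant identity $\det[a-p^2,\,h-pq,\,b-q^2]=\Delta-(bp^2-2hpq+aq^2)$ in your alternative route for (v) is also correct.

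The genuine gap is that the theorem \emph{is} its list of explicit local conditions, and the proposal never derives a single one of them. Nowhere do you carry out the computations that would produce $\Delta\in\sos(1)$ and $d\in\sos(2)$ for $m=2$; conditions ($\alpha$)--($\delta$) for $m=3$ (in particular the dyadic condition ($\alpha$) relating $d$ and $\widetilde{d}$, which is the subtlest of the four); the equivalence of $2$-adic representability by $I_4$ with $\Delta\in\sos(3)$, i.e.\ with $\Delta$ avoiding the classes $4^k(8\ell+7)$; or the unconditional $2$-adic representability by $I_5$. You explicitly label the $2$-adic analysis ``the hard part'' requiring ``a careful case distinction'' --- and then do not perform it; the odd-prime Jordan/Hasse comparison for $m=2,3$ is likewise only asserted to ``yield'' ($\beta$)--($\delta$), and in the elementary route for (v) the choice of $(p,q)$ keeping the residual form positive semidefinite while steering $\Delta-Q(p,q)$ away from every class $4^k(8\ell+7)$ is exactly the crux of Mordell's own argument, again left unexecuted. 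So what you have is a correct and well-motivated reduction of the theorem to a family of local problems, together with an accurate diagnosis of where the difficulty sits; but the difficulty itself is untouched, and none of the stated criteria is actually established.
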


Note that our formulation of the results in
the case $m=3$ is a somewhat streamlined version of the one
given by Mordell in his original article \cite{mo2}.

We now apply Mordell's results to the determination of $\Ss_m(n)$ for
$2\leq m\leq 7$.

\begin{theo}\label{m<8}
Let $n\in\NN$ and $T\in\ZZ$.
\begin{enumerate}
\item[(i)]  $T\in\Ss_2(n)$ if and only if
$2n-T^2\in \sos(1)$.
\item[(ii)]  $T\in\Ss_3(n)$ if and only if 
\begin{itemize}
\item either $T=\pm 3t$ and $n=3t^2$
for some $t\in\NN$, or 
\item $3n-T^2>0$ and the following holds:
write $3n-T^2=D_0D_1^2$ with $D_0,D_1\in \NN$ and $D_0$ squarefree, say,
$D_0=2^k3^\ell q_1\ldots q_r$ with $k,\ell\in\{ 0,1\}$,
$q_1,\ldots,q_r$ ($r\geq 0$) pairwise different primes with
$q_i\not\in\{ 2,3\}$.
Then 
\begin{enumerate}
\item[(a)] $q_i\equiv 1\bmod 6$ for all $1\leq i\leq r$, and
\item[(b)] if $\ell =1$ or $\gcd(3,T,n)=3$, then $k=1$.
\end{enumerate} 
\end{itemize}
\item[(iii)] {\em (Goldmakher, Pollack \cite{gp})} 
$T\in \Ss_4(n)$ if and only if $n\equiv T\bmod 2$ and
$4n-T^2\in\sos(3)$.  In particular, 
$\Ss_4'(n)=\{ T\in\TT_{m}(n)\,|\,4n-T^2\in\sos(3)\}$.
\item[(iv)] $T\in \Ss_m(n)$ for $5\leq m\leq 7$ if and only if
$n\equiv T\bmod 2$ and $mn-T^2\geq 0$.  In particular,
$\Ss_m(n)$ is full for $5\leq m\leq 7$.
\end{enumerate}
\end{theo}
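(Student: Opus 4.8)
The plan is to derive each part by combining Proposition~\ref{equiv} (which reduces membership in $\Ss_m(n)$ to representability of the form $[m,T,n]$ as a sum of $m$ squares of integral linear forms, plus the parity condition $T\equiv n\bmod 2$ when $m\geq 4$) with the appropriate case of Mordell's Theorem~\ref{thm-mordell}, applied to $[a,h,b]=[m,T,n]$, so that $\Delta(m,T,n)=mn-T^2$, $d=\gcd(m,T,n)$, $\widetilde d=\gcd(m,2T,n)$, and $a=m$.

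For part (iv) I would first dispose of the boundary $mn-T^2=0$ case via Proposition~\ref{max}, and then for $5\leq m\leq 7$ with $mn-T^2>0$ invoke Theorem~\ref{thm-mordell}(v): since Eq.~\ref{eq-mordell1} is solvable for every such form once $m\geq 5$, Proposition~\ref{equiv}(ii) gives that $T\in\Ss_m(n)$ is equivalent to the single parity condition $n\equiv T\bmod 2$ (together with $mn-T^2\geq 0$). Fullness then follows directly from the definition, since $\Ss_m'(n)=\{T:n\equiv T\bmod 2,\ T^2<mn\}=\TT_m(n)$.

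Part (i) is the case $m=2$: here Proposition~\ref{equiv}(i) removes the parity hypothesis, and Theorem~\ref{thm-mordell}(i) requires $\Delta=2n-T^2\in\sos(1)$ (a perfect square) together with $d=\gcd(2,T,n)\in\sos(2)$; but every integer lies in $\sos(2)$ is false, so I must check the second condition is automatic. In fact $d\in\{1,2\}$, and both $1=1^2+0^2$ and $2=1^2+1^2$ are in $\sos(2)$, so the condition $d\in\sos(2)$ is vacuous and only $2n-T^2\in\sos(1)$ survives. For part (iii) I would simply quote Proposition~\ref{equiv}(ii) and Theorem~\ref{thm-mordell}(iv): the former demands $n\equiv T\bmod 2$ and representability, the latter translates representability into $\Delta=4n-T^2\in\sos(3)$ when $\Delta>0$, while the $\Delta=0$ case is covered by Proposition~\ref{obvious}(a) together with the observation that $4n=T^2$ forces $4n-T^2=0\in\sos(3)$; the ``in particular'' statement about $\Ss_4'(n)$ is then immediate from the definition of $\TT_m(n)$.

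The main obstacle is part (ii), where I must unwind the four arithmetic conditions ($\alpha$)--($\delta$) of Theorem~\ref{thm-mordell}(ii) for the specific form $[3,T,n]$ and show they collapse into the clean statement about $3n-T^2=D_0D_1^2$. Here $a=3$, so $v_p(a)=0$ for every odd prime $p\neq 3$ and $v_3(a)=1$; this already streamlines ($\beta$), ($\gamma$), ($\delta$) considerably. For an odd prime $p\mid D_0$ with $p\neq 3$ (so $v_p(\Delta)$ is odd and $v_p(a)=0$ is even), condition ($\beta$) reads $\left(\frac{-3}{p}\right)=1$, which by quadratic reciprocity holds precisely when $p\equiv 1\bmod 6$; this yields clause~(a). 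The prime $p=3$ requires separate handling through ($\gamma$) when $\ell=1$ (odd $v_3(\Delta)$, odd $v_3(a)=1$) and through ($\alpha$) and ($\delta$) to extract clause~(b) relating $k$, $\ell$, and $\gcd(3,T,n)$. After disposing of the boundary case $3n-T^2=0$ via Proposition~\ref{max} (giving the $n=3t^2$, $T=\pm 3t$ alternative), the hard work is the careful, prime-by-prime Legendre-symbol bookkeeping — tracking how $v_p(d)$ and $v_p(\Delta)$ behave at $p=2,3$ and how $\widetilde d\in\{d,2d\}$ interacts with condition ($\alpha$) — and I expect the delicate point to be correctly characterizing when $k=1$ is forced.
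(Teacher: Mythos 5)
Your overall route is exactly the paper's: reduce membership in $\Ss_m(n)$ to representability of $[m,T,n]$ as a sum of $m$ squares of integral linear forms via Proposition~\ref{equiv}, then invoke the corresponding case of Theorem~\ref{thm-mordell} with $a=m$, $\Delta=mn-T^2$, $d=\gcd(m,T,n)$. Parts (i), (iii) and (iv) go through essentially as you describe, with one small caveat: in the degenerate case $\Delta=0$ (which Theorem~\ref{thm-mordell} excludes, since it assumes $\Delta>0$) the clean way to conclude is Proposition~\ref{obvious}(a) together with $m\in\sos(m)$ and then Proposition~\ref{equiv}; your alternative of invoking Proposition~\ref{max} in (iv) silently uses the extra fact that $5,6,7$ are squarefree, so that $mn=T^2$ really forces $n=ma^2$.

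The genuine gap is in part (ii). You correctly derive clause (a) from condition $(\beta)$ and quadratic reciprocity, but clause (b) --- which is where the content of the statement lies --- is only announced (``the hard work'', ``the delicate point'') and never carried out; moreover your pointer to condition $(\alpha)$ for the prime $3$ is misplaced, since $(\alpha)$ concerns the prime $2$: here $d=\gcd(3,T,n)\in\{1,3\}$ is odd, so $\widetilde d=d$ and $(\alpha)$ is vacuous for $[3,T,n]$. What actually yields clause (b) is the following computation, which your outline stops short of. If $\ell=1$, then $p=3$ falls under $(\gamma)$ (as you note), with $a_3=1$ and the squarefree part of $\Delta_3$ equal to $2^kq_1\cdots q_r$; by clause (a) each $\bigl(\frac{q_i}{3}\bigr)=1$, so $(\gamma)$ collapses to $\bigl(\frac{-2^k}{3}\bigr)=(-1)^{k+1}=1$, i.e.\ $k=1$. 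If $\ell=0$, then $(\delta)$ is nonvacuous only at $p=3$ and only when $v_3(d)$ is odd, i.e.\ $d=3$, and the same evaluation $\bigl(\frac{-\Delta_3}{3}\bigr)=\bigl(\frac{-2^k}{3}\bigr)=1$ again forces $k=1$. Together these give precisely clause (b): $k=1$ whenever $\ell=1$ or $\gcd(3,T,n)=3$; and since Mordell's conditions are equivalences, necessity and sufficiency both follow. Without this step your proposal establishes only part of the theorem.
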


\begin{proof}
(i) By Proposition \ref{equiv}, $T\in \Ss_2(n)$ if and only if
$[2,T,n]$ is a sum of two squares of integral linear forms.
Now $d=\gcd (2,T,n)\in\{ 1,2\}$, so $d\in \sos (2)$.
By Theorem \ref{thm-mordell}(i) and Proposition \ref{obvious},
it follows that  $T\in \Ss_2(n)$ if and only if $2n-T^2\in\sos (1)$.

(ii)  It is certainly necessary to have $3n-T^2\geq 0$ by Eq.~\ref{cond}.
By Proposition \ref{max}, $T\in \Ss_3(n)$ with $3n-T^2=0$ if and only if
$T=\pm 3t$ and $n=3t^2$ for some $t\in\NN$.  So assume from now on
$3n-T^2>0$. 
In view of Proposition \ref{equiv}, it suffices to apply the
conditions ($\alpha$)--($\delta$) in Theorem \ref{thm-mordell}
to the binary form $3X^2+2TXY+nY^2$, for which $a=3$,
$d=\gcd (3,T,n)=\gcd (3,2T,n)=\widetilde{d}\in\{ 1,3\}$ and
$\Delta=3n-T^2$.  Trivially, ($\alpha$) is satisfied.  As for
($\beta$) and ($\gamma$), the only odd primes $p$ with 
odd $v_p(\Delta)$ are the $q_i$'s, and $3$ if $\ell=1$.
Since $v_{q_i}(a)=v_{q_i}(3)=0$, condition ($\beta$) applies to these primes,
and with $a_{q_i}=3_{q_i}=3$ and using quadratic reciprocity,
it translates into
\begin{equation}\label{qi}
\left(\frac{-3}{q_i}\right)=\left(\frac{q_i}{3}\right)=1\,,
\end{equation}
or, equivalently, $q_i\equiv 1\bmod 3$ and therefore, since the
$q_i$ are odd, $q_i\equiv 1\bmod 6$.

If $\ell=0$, no further odd prime has to be considered in 
($\beta$) and ($\gamma$).
If $\ell=1$, then the odd prime $p=3$ has to be considered as well,
but then we are in the situation of ($\gamma$) since $v_3(a)=v_3(3)=1$.
Here, $a_3=1$, and the squarefree part of $\Delta_3$ is given
by $2^kq_1\ldots q_r$.  Hence, in view of Eq.~\ref{qi}, 
condition ($\gamma$) becomes
\begin{equation}\label{p=3}
\left(\frac{-2^kq_1\ldots q_r}{3}\right)=
\left(\frac{-2^k}{3}\right)=\left(\frac{2^{k+1}}{3}\right)=1\,,
\end{equation}
from which we derive the condition that if $\ell=1$, then $k=1$

As for condition ($\delta$), the odd primes $p$ with even $v_p(\Delta)$
are all odd primes $p\not\in\{ 3,q_1,\ldots, q_r\}$, and $p=3$ if
$\ell=0$, but we have odd $v_p(d)$ iff $d=p=3$.  Thus, condition
($\delta$) boils down to the condition that if $d=3$ and $\ell =0$,
then again $k=1$ (note that $-a_3\Delta_3=-\Delta_3$).

Thus, the previous two conditions can be summarized:
if $d=3$ or $\ell=1$, then $k=1$. 

(iii) and (iv) follow directly from Theorem \ref{thm-mordell}(iv),(v)
together with Proposition \ref{equiv}(ii).
\end{proof}

Let us illustrate the rather technical conditions in Theorem \ref{m<8}(ii)
with three examples.

\begin{example}
Let us determine $\Ss_3(n)$ for $n=42$. The $T\geq 0$ satisfying 
$3n-T^2=126-T^2\geq 0$ and $T\equiv n\bmod 2$ are
$T=0,2,4,6,8,10$.  Using the notations from 
Theorem \ref{m<8}(ii) 
and its proof and checking the criteria (a) and (b) there, 
we get the following table:
$$
\begin{array}{c|c|c|c|c|c|l}
T & \Delta=3n-T^2 & D_0 & k & \ell & d=\gcd(3,n,T) & T\in \Ss_3(n)?\\
\hline\hline
0 & 126 & 2\cdot 7 & 1 & 0 & 1 & \mbox{yes} \\\hline
2 & 122 & 2\cdot 61 & 1 & 0 & 1 &  \mbox{yes} \\\hline
4 & 110 & 2\cdot 5\cdot 11 &   1 & 0 & 1 & \mbox{no:}\ 5, 11\not\equiv 1\bmod 6  \\\hline
6 & 90 & 2\cdot 5 &   1 & 0 & 3 & \mbox{no:}\ 5\not\equiv 1\bmod 6 \\\hline
8 & 62 & 2\cdot 31 & 1 & 0 & 1 & \mbox{yes}  \\\hline
10 & 26 & 2\cdot 13 & 1 & 0 & 1 & \mbox{yes} 
\end{array}$$
Indeed, the only representation of $42$
as a sum of 
three squares of positive integers is $42=5^2+4^2+1^2$
(cf.~Theorem \ref{lehmer}(3)), from which we also easily get that 
$\Ss_3(42)=\{ \pm 10,\ \pm 8,\ \pm 2,\ 0\}$.
\end{example}

\begin{example}
We now determine $\Ss_3(n)$ for $n=43$.  
The $T\geq 0$ satisfying $3n-T^2=129-T^2\geq 0$ and $T\equiv n\bmod 2$ are
$T=1,3,5,7,9,11$.  Similarly to the previous example, we now get the following table:
$$
\begin{array}{c|c|c|c|c|c|l}
T & \Delta=3n-T^2 & D_0 & k & \ell & d=\gcd(3,n,T) & T\in \Ss_3(n)? \\
\hline\hline
1 & 128 & 2 & 1 & 0 & 1 & \mbox{yes} \\\hline
3 & 120 & 2\cdot 3\cdot 5 & 1 & 1 & 1 & \mbox{no:}\ 5\not\equiv 1\bmod 6  \\\hline
5 & 104 & 2\cdot 13 &   1 & 0 & 1 & \mbox{yes}  \\\hline
7 & 80 & 5 &   0 & 0 & 1 & \mbox{no:}\ 5\not\equiv 1\bmod 6  \\\hline
9 & 48 & 3 & 0 & 1 & 1 & \mbox{no:}\ (k,\ell)=(0,1)  \\\hline
11 & 8 & 2 & 1 & 0 & 1 & \mbox{yes}  
\end{array}$$
Indeed, the only representation of $43$
as a sum of 
three squares of positive integers is $43=5^2+3^2+3^2$
(cf.~Theorem \ref{lehmer}(3)), from which we also easily get that 
$\Ss_3(43)=\{ \pm 11,\ \pm 5,\ \pm 1\}$.
\end{example} 

\begin{example}
Let us finally determine $\Ss_3(n)$ for $n=75$.  
Note that here, $n=3t^2$ for $t=5$ from which it follows by 
Proposition~\ref{max}
that $T=\pm 3t=\pm 15\in \Ss_3(75)$.  Thus, the interesting cases are
the $T\geq 0$ satisfying $3n-T^2=225-T^2>0$ and $T\equiv n\bmod 2$, which are 
$T=1,3,5,7,9,11,13$.  Using the notations from 
Theorem \ref{m<8}(ii) and its proof, we now get the following table:
$$
\begin{array}{c|c|c|c|c|c|l}
T & \Delta=3n-T^2 & D_0 & k & \ell & d=\gcd(3,n,T) & T\in \Ss_3(n)? \\
\hline\hline
1 & 224 & 2\cdot 7 & 1 & 0 & 1 & \mbox{yes} \\\hline
3 & 216 & 2\cdot 3 & 1 & 1 & 3 & \mbox{yes}  \\\hline
5 & 200 & 2 &   1 & 0 & 1 & \mbox{yes}  \\\hline
7 & 176 & 11 &   0 & 0 & 1 & \mbox{no:}\ 11\not\equiv 1\bmod 6  \\\hline
9 & 144 & 1 & 0 & 0 & 3 &  \mbox{no:}\ (k,d)=(0,3) \\\hline
11 & 104 & 2\cdot 13 & 1 & 0 & 1 & \mbox{yes} \\\hline
13 & 56 & 2\cdot 7 & 1 & 0 & 1 & \mbox{yes}
\end{array}$$
Indeed, the only representations of $75$
as a sum of 
three squares of positive integers are $75=7^2+5^2+1^2=5^2+5^2+5^2$, from which we also easily
get that 
$\Ss_3(75)=\{ \pm 15,\ \pm 13,\ \pm 11,\ \pm 5,\ \pm 3,\ \pm 1\}$.
\end{example}

\section{Further results on $\Ss_m(n)$ for $m\geq 8$}

For $m\geq 5$, it turns out that $\Ss_m(n)$ only depends on the
maximal value in this set.  So we define
$$T_m^*(n)=\max\bigl(\Ss_m(n)\bigr)\,.$$

\begin{prop}\label{T-2}
Let $n,m\in\NN$,  $m\geq 5$.   If $0<T\in \Ss_m(n)$, then $T-2\in \Ss_m(n)$.
In particular,
$$\Ss_m(n) = \{ T\in\ZZ\,|\,T\equiv n\bmod 2,\ |T|\leq T_m^*(n)\}\,.$$
\end{prop}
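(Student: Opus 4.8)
The plan is to prove the key claim $0<T\in\Ss_m(n)\implyr T-2\in\Ss_m(n)$ for $m\geq 5$, after which the displayed equation follows immediately: by symmetry $-T\in\Ss_m(n)$ too, so repeatedly subtracting $2$ produces every $T'$ with $T'\equiv n\bmod 2$ and $|T'|\leq T_m^*(n)$, while condition (2) in Eq.~\ref{cond} and the maximality of $T_m^*(n)$ show no other values occur. So the whole statement reduces to a single ``descent by $2$'' step.

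\emph{First I would} take a solution $\xv=(x_1,\ldots,x_m)\in\ZZ^m$ of Eq.~\ref{system} with sum $T>0$ and try to produce, by a local modification of the coordinates, a new solution with the same sum of squares $n$ but with sum exactly $T-2$. The natural move is to find a quadruple of coordinates, say $x_i,x_j,x_k,x_l$, and replace it by an orthogonal transform (as in the $(2,0,0,0)$ analysis inside the proof of Proposition~\ref{equiv}) that preserves $x_i^2+x_j^2+x_k^2+x_l^2$ while decreasing $x_i+x_j+x_k+x_l$ by $2$. For instance, since $T>0$, some coordinate is positive; the cleanest reductions are the local swaps that fix the sum of squares on a small set of coordinates and lower the linear sum by $2$: e.g.\ on two coordinates $(x_i,x_j)\mapsto(x_j,x_i)$ changes nothing, but on suitable configurations one can use transformations like $(a,b)\mapsto(b,-a)$-type sign-and-swap moves, or the $4$-variable half-sum substitution $c_1,\ldots,c_4$ appearing in the proof of Proposition~\ref{equiv}, to shift the sum while preserving $n$. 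The freedom of having $m\geq 5$ coordinates is exactly what should make room for such a move, and this is why the statement fails for small $m$.

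\emph{The main obstacle} will be the case where no single local move decreases the sum by precisely $2$ without leaving $\ZZ^m$ or changing $n$ --- i.e.\ the parity bookkeeping. The target $T-2$ automatically satisfies $T-2\equiv n\bmod 2$, so that congruence is free; the real issue is exhibiting an integral solution. I expect the right framework is the binary-form reformulation: $T\in\Ss_m(n)$ corresponds (via Eq.~\ref{linear} and Proposition~\ref{equiv}) to writing $[m,T,n]$ as a sum of $m$ squares of integral linear forms, and for $m\geq 5$ Theorem~\ref{thm-mordell}(v) guarantees solvability whenever the form is positive semidefinite. Thus an alternative, cleaner route: show directly that $T-2\in\TT_m(n)$ (or equals $\pm ma$) forces $(T-2)^2\leq mn$ and $T-2\equiv n\bmod 2$, so $[m,T-2,n]$ is positive semidefinite, and then invoke Mordell's $m=5$ result together with Proposition~\ref{easy}(2) to conclude $T-2\in\Ss_m(n)$.

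Concretely, I would argue as follows. Since $0<T\in\Ss_m(n)$ we have $T^2\leq mn$, hence $(T-2)^2<T^2\leq mn$ (as $T>0$ gives $(T-2)^2\leq T^2$, with room to spare unless $T=1$, in which case $T-2=-1$ and by symmetry $1\in\Ss_m(n)$ already yields $-1\in\Ss_m(n)=T-2$). Therefore $T-2\in\TT_m(n)\cup\{0\}$ satisfies the positive-semidefiniteness bound $\Delta(m,T-2,n)\geq 0$ together with $T-2\equiv n\bmod 2$. By Theorem~\ref{thm-mordell}(v), the form $[5,T-2,n]$ is a sum of $5$ squares of integral linear forms, so $T-2\in\Ss_5(n)$ after checking the parity condition of Proposition~\ref{equiv}; and by Proposition~\ref{easy}(2), $\Ss_5(n)\subseteq\Ss_m(n)$ for all $m\geq 5$. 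This gives $T-2\in\Ss_m(n)$, completing the descent and hence the displayed description of $\Ss_m(n)$.
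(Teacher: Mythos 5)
Your concrete argument (the Mordell route in your last two paragraphs) has a genuine gap, and it is not fixable: the bridge from ``$[m,T-2,n]$ is a sum of $m$ squares of integral linear forms'' back to ``$T-2\in\Ss_m(n)$'' is Proposition~\ref{equiv}, which holds only for $m\leq 7$. For $m\geq 8$ the representation $m=\sum_{i=1}^m a_i^2$ is no longer forced to be (essentially) all $1$'s or $(2,0,0,0,1,\ldots,1)$, and the implication fails outright. Indeed, if your argument were valid it would prove that $\Ss_m(n)$ is full for every $m\geq 5$, since Theorem~\ref{thm-mordell}(v) makes $[m,T',n]$ a sum of $m$ squares of linear forms for \emph{every} $T'\in\TT_m(n)$; but the paper shows this is false, e.g.\ $13\notin\Ss_9(19)$ (Example~\ref{ex9}, via Proposition~\ref{bound}) even though $[9,13,19]$ has $\Delta=2>0$ and hence is a sum of $9$ squares of linear forms. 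Your fallback of working with $[5,T-2,n]$ and then using $\Ss_5(n)\subseteq\Ss_m(n)$ fails for a different reason: from $T^2\leq mn$ you only get $(T-2)^2<mn$, not $(T-2)^2\leq 5n$, so $[5,T-2,n]$ need not even be positive semidefinite. Concretely, $T=9\in\Ss_9(9)$ and $T-2=7$ satisfies $7^2=49>45=5\cdot 9$, so $7\notin\Ss_5(9)$; yet $7\in\Ss_9(9)$ via $9=2^2+5\times 1^2+3\times 0^2$. So the descent genuinely cannot be pushed down to five variables; it must be carried out in $m$ variables, where Mordell's theorem gives no usable information once $m\geq 8$.

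The paper instead proves the descent by induction on $m$. The base case $m=5$ is exactly where your reasoning is legitimate: fullness of $\Ss_5(n)$ (Theorem~\ref{m<8}(iv), which does rest on Proposition~\ref{equiv} and Theorem~\ref{thm-mordell}(v)) immediately gives $T-2\in\Ss_5(n)$. For the step from $m$ to $m+1$, one takes a solution $(a_0,a_1,\ldots,a_m)$ with sum $T\geq 2$ and splits off one coordinate: either some coordinate, say $a_0$, is $\leq 0$, in which case $T'=\sum_{i=1}^m a_i=T-a_0\geq 2$, or all coordinates are $\geq 1$, in which case $T'\geq m>4$; either way the remaining $m$ coordinates give $2\leq T'\in\Ss_m(n')$ with $n'=\sum_{i=1}^m a_i^2$, the induction hypothesis replaces them by a solution with sum $T'-2$, and recombining with $a_0$ yields $T-2\in\Ss_{m+1}(n)$. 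Your first paragraph's ``local modification'' idea gestures in this direction, but you never produce the actual move, and the argument you do commit to is the one that breaks. If you want to salvage your write-up, keep the $m=5$ case as you have it and replace everything after it with an induction of this coordinate-splitting type.
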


\begin{proof} If $T=1\in \Ss_m(n)$, then by symmetry
$-1=-T=T-2\in \Ss_m(n)$.  Hence, it suffices to consider only the
case $T\geq 2$.  We use induction on $m$.  If $m=5$, then the fullness
of  $\Ss_m(n)$ (Proposition~\ref{m<8}) implies the result.
So suppose the result holds for a given $m\geq 5$, and let 
$2\leq T\in \Ss_{m+1}(n)$.  Then there exist $a_i\in\ZZ$, $0\leq i\leq m$,
with $\sum_{i=0}^ma_i=T\geq 2$ and $\sum_{i=0}^ma_i^2=n$. 

Suppose there exists some $i\in\{ 0,\ldots,m\}$ with $a_i\leq 0$,
say, $a_0\leq 0$.  Then $\sum_{i=1}^ma_i=T-a_0\geq T\geq 2$.
On the other hand, if each $a_i\geq 1$, then 
$\sum_{i=1}^ma_i\geq m> 4$.
Thus, we may assume that for $T'=\sum_{i=1}^ma_i$ and with 
$n'=\sum_{i=1}^ma_i^2$, we have 
$2\leq T'\in \Ss_m(n')$, and by induction hypothesis we have
$T'-2\in \Ss_m(n')$.  So there exist $b_i\in\ZZ$, $1\leq i\leq m$,
with $T'-2=\sum_{i=1}^mb_i$ and $n'=\sum_{i=1}^mb_i^2$, from which we get
$$T-2=a_0+T'-2=a_0+\sum_{i=1}^mb_i\quad\mbox{and}\quad
n=a_0^2+n'=a_0^2+\sum_{i=1}^mb_i^2\,,$$
which shows that $T-2\in  \Ss_{m+1}(n)$.
\end{proof}

\begin{coro}\label{T-2cor}
Let $n,m\in\NN$,  $m\geq 5$.   Then $\Ss_m(n)$ is full iff
$$T^*_m(n)=\left\{\begin{array}{lc}
\bigl[\sqrt{mn}\bigr] & \mbox{if 
$n\equiv \bigl[\sqrt{mn}\bigr]\bmod{2}$;}\\[1ex]
\bigl[\sqrt{mn}\bigr]-1 & \mbox{if 
$n\not\equiv \bigl[\sqrt{mn}\bigr]\bmod{2}$.}
\end{array}\right.$$
\end{coro}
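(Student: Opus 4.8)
The plan is to reduce the asserted equivalence to a comparison of two symmetric ``parity intervals'' and then to a computation of their maxima. First I would invoke Proposition~\ref{T-2}, which for $m\geq 5$ gives $\Ss_m(n)=\{T\in\ZZ\,:\,T\equiv n\bmod 2,\ |T|\leq T^*_m(n)\}$, so that $\Ss_m(n)$ is entirely governed by the single number $T^*_m(n)$. Since $\TT_m(n)=\{T\in\ZZ\,:\,T\equiv n\bmod 2,\ T^2<mn\}$ is likewise a symmetric set of integers of fixed parity lying in an interval about $0$, I would note that $\Ss_m'(n)\subseteq\TT_m(n)$ holds automatically by the necessary conditions in Eq.~\ref{cond}, so that fullness is equivalent to the reverse inclusion $\TT_m(n)\subseteq\Ss_m(n)$. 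By the symmetry and interval structure of both sets (and since $\TT_m(n)\neq\emptyset$ because $mn\geq 5$), this reverse inclusion holds precisely when $\max\bigl(\TT_m(n)\bigr)\leq T^*_m(n)$.

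Writing $N=[\sqrt{mn}]$, the next step is to pin down the two relevant extremal values. On the one hand, every $T\in\Ss_m(n)$ satisfies $T\equiv n\bmod 2$ and $T^2\leq mn$ by Eq.~\ref{cond}, so $T^*_m(n)$ is at most the largest integer of the parity of $n$ that is $\leq N$; this largest integer is exactly the right-hand side of the asserted formula, equal to $N$ when $n\equiv N\bmod 2$ and to $N-1$ otherwise. Denoting it by $M$, we thus have $T^*_m(n)\leq M$ unconditionally. On the other hand, $\max\bigl(\TT_m(n)\bigr)$ is the largest integer of the parity of $n$ whose square is \emph{strictly} less than $mn$. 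Combining the two remarks, fullness is equivalent to the chain $\max\bigl(\TT_m(n)\bigr)\leq T^*_m(n)\leq M$.

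In the generic situation—when $mn$ is not a perfect square, or when $mn$ is a perfect square whose square root has parity different from that of $n$—no integer of the parity of $n$ satisfies $T^2=mn$, so the strict and non-strict constraints define the same maximal value and $\max\bigl(\TT_m(n)\bigr)=M$. Then the chain collapses to $M\leq T^*_m(n)\leq M$, forcing $T^*_m(n)=M$; and conversely $T^*_m(n)=M$ immediately yields $\max\bigl(\TT_m(n)\bigr)=M\leq T^*_m(n)$, hence fullness. This is exactly the claimed equivalence in the generic case.

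The delicate point, which I expect to be the main obstacle, is the remaining case where $mn$ is a perfect square and $\sqrt{mn}=N$ has the same parity as $n$. Here $N^2=mn$ is excluded from $\TT_m(n)$, so $\max\bigl(\TT_m(n)\bigr)=N-2$, which is strictly smaller than $M=N$, and the interval comparison alone only gives fullness $\iff T^*_m(n)\geq N-2$. To match the value $M=N$ singled out by the formula, one cannot rely on the interval argument but must use Proposition~\ref{max}: it shows that $(T^*_m(n))^2=mn$, i.e.\ $T^*_m(n)=N$, occurs precisely when $n=ma^2$ for some $a\in\NN$ (in which case $N=ma\equiv n\bmod 2$ automatically), while otherwise the Cauchy--Schwarz bound $T^2\leq mn$ can only be met strictly and $T^*_m(n)\leq N-2$. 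Carefully analysing whether the intermediate value $T^*_m(n)=N-2$ can arise, and reconciling it with the stated formula, is the crux of the argument and the step I would expect to require the most care.
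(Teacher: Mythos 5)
Your reduction is exactly the one the paper leaves implicit: by Proposition \ref{T-2}, for $m\geq 5$ the set $\Ss_m(n)$ is the symmetric parity interval $\{T\in\ZZ\,:\,T\equiv n\bmod 2,\ |T|\leq T^*_m(n)\}$, so fullness is equivalent to $\max\bigl(\TT_m(n)\bigr)\leq T^*_m(n)$, and your treatment of the generic case (where no integer of the parity of $n$ squares to $mn$) is complete and correct. The gap is the case you yourself flag as the crux and then leave open: $mn=N^2$ with $N=\bigl[\sqrt{mn}\bigr]\equiv n\bmod 2$. There $\max\bigl(\TT_m(n)\bigr)=N-2$, so your criterion gives fullness $\iff T^*_m(n)\geq N-2$, whereas the corollary demands $T^*_m(n)=N$; and by Proposition \ref{max}, $T^*_m(n)=N$ can only happen when $n=ma^2$ for some $a\in\NN$, otherwise $T^*_m(n)\leq N-2$. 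To finish you would have to rule out the constellation ``full, $n\neq ma^2$, $T^*_m(n)=N-2$''. That cannot be done, because it occurs.

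Concretely, take $(m,n)=(8,2)$: here $mn=16$, $N=4\equiv n\bmod 2$, and $n$ is not of the form $8a^2$. Then $\Ss_8(2)=\{0,\pm 2\}$ and $\TT_8(2)=\{0,\pm 2\}$, so $\Ss_8(2)$ is full --- as it must be by Corollary \ref{small-n} and by Theorem \ref{m=8-11}(i) --- yet $T^*_8(2)=2\neq 4=\bigl[\sqrt{mn}\bigr]$. The pairs $(m,n)=(9,4)$ and $(9,16)$ behave the same way. So the corollary as printed is false in precisely the case you isolated (and is even inconsistent with the paper's own Theorem \ref{m=8-11}(i) combined with Proposition \ref{max}); the defect lies in the statement, not in your method. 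What your argument does prove, for $m\geq 5$, is: $\Ss_m(n)$ is full iff $T^*_m(n)\geq\max\bigl(\TT_m(n)\bigr)$, equivalently iff $T^*_m(n)=\max\bigl(\TT_m(n)\bigr)$ or ${T^*_m(n)}^2=mn$; this agrees with the displayed formula except when $mn$ is a perfect square with $\sqrt{mn}\equiv n\bmod 2$ and $n\neq ma^2$, in which case fullness is instead equivalent to $T^*_m(n)=\sqrt{mn}-2$.
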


\begin{prop}\label{bound}  Let $m,n\in\NN$.  
If $T\in\Ss_m'(n)$, then $T^2\leq m(n-1)+1$.
\end{prop}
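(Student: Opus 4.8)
The plan is to start from a representation realizing $T$ and turn the strict Cauchy--Schwarz inequality into a quantitative statement. So I would suppose $T\in\Ss_m'(n)$ and pick $\xv=(x_1,\ldots,x_m)\in\ZZ^m$ with $\sum_i x_i = T$ and $\sum_i x_i^2 = n$, where by definition $T^2<mn$. The whole argument rests on the Lagrange-type identity
\[
mn-T^2 \;=\; m\sum_{i=1}^m x_i^2-\Bigl(\sum_{i=1}^m x_i\Bigr)^2\;=\;\sum_{1\le i<j\le m}(x_i-x_j)^2,
\]
which I would verify by expanding the right-hand side (the cross terms reproduce $\sum_i x_i^2-(\sum_i x_i)^2$). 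This rewrites the defect $mn-T^2$ as a sum of squares of integer differences and reduces the desired bound $T^2\le m(n-1)+1$ to the single inequality $\sum_{i<j}(x_i-x_j)^2\ge m-1$.

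Next, since $T\in\Ss_m'(n)$ forces $T^2<mn$, the defect is strictly positive, so (as in the proof of Proposition \ref{max}) the $x_i$ cannot all be equal; in particular at least one difference $x_i-x_j$ is nonzero, hence has square $\ge 1$. The real task is to upgrade this single nonzero term to a total of at least $m-1$.

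For the lower bound I would sort the entries as $x_1\le x_2\le\cdots\le x_m$ and introduce the consecutive gaps $g_k=x_{k+1}-x_k\ge 0$ for $1\le k\le m-1$, so that $x_j-x_i=\sum_{k=i}^{j-1}g_k$ for $i<j$. Because the $g_k$ are nonnegative, $(x_j-x_i)^2\ge\sum_{k=i}^{j-1}g_k^2$, and in summing over all pairs $i<j$ each gap $g_k$ gets counted once for every pair with $i\le k<j$, i.e.\ exactly $k(m-k)$ times. This yields
\[
\sum_{1\le i<j\le m}(x_i-x_j)^2\;\ge\;\sum_{k=1}^{m-1}k(m-k)\,g_k^2\;\ge\;(m-1)\sum_{k=1}^{m-1}g_k^2,
\]
using $k(m-k)\ge m-1$ for every $1\le k\le m-1$. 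Since the $x_i$ are not all equal, some $g_k\ge 1$, so $\sum_k g_k^2\ge 1$ and the whole sum is $\ge m-1$. Combined with the identity this gives $mn-T^2\ge m-1$, i.e.\ $T^2\le m(n-1)+1$, as desired (the case $m=1$ being vacuous, since then $\Ss_1'(n)=\emptyset$).

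I expect the main obstacle to be the clean derivation of $\sum_{i<j}(x_i-x_j)^2\ge m-1$: one must resist settling for the weak bound ``$\ge 1$'' coming from a single unequal pair and instead exploit that one deviating entry already interacts with all $m-1$ others. The gap-counting bookkeeping above is the efficient way to capture this; an alternative I could use is the purely combinatorial count that among $m$ integers not all equal, at least $m-1$ of the $\binom{m}{2}$ pairs are distinct (the extremal configuration being one value differing from the other $m-1$), with each such pair contributing at least $1$ to the sum.
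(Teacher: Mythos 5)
Your proof is correct, but it follows a genuinely different route from the paper's. You rewrite the defect via the Lagrange identity $mn-T^2=\sum_{1\le i<j\le m}(x_i-x_j)^2$ and then prove the sharp combinatorial bound that this sum is at least $m-1$ for integers not all equal: sorting the $x_i$, passing to the consecutive gaps $g_k\ge 0$, using $(x_j-x_i)^2\ge\sum_{k=i}^{j-1}g_k^2$ (valid since the gaps are nonnegative), and counting each $g_k^2$ exactly $k(m-k)$ times, with $k(m-k)\ge m-1$ because $k(m-k)-(m-1)=(k-1)(m-1-k)\ge 0$. Every step checks out, and the configuration $(a,\dots,a,a\pm 1)$ shows the inequality $mn-T^2\ge m-1$ is attained, so nothing is lost. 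The paper instead argues by contradiction with induction on $n$: assuming $m(n-1)+1<T^2<mn$, it writes $T=mk+s$ and $T^2=m(n-1)+\ell$ with $2\le\ell\le m-1$, deduces $2\le s\le m-2$ by reducing modulo $m$, and then re-centers a representation as $x_i=k+a_i$ to produce a smaller instance $(s,n')$ violating the same bound, with a separate analysis of the degenerate case $n'=n$. Your argument is shorter, self-contained, and makes the extremal cases transparent; the paper's version has the incidental merit of exercising the same re-centering device $x_i=k+a_i$ that powers Lemma~\ref{goingdown}, so its toolkit stays uniform across the section, but as a proof of Proposition~\ref{bound} itself your approach is arguably cleaner.
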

\begin{proof}  We may assume $T\geq 0$.
Suppose $T\in\Ss_m'(n)$ with $T^2> m(n-1)+1$.  Since by assumption
$T^2<mn$, there exist $\ell,k,s\in\NN_0$ with $0\leq s\leq m-1$ and
$2\leq \ell\leq m-1$ (which forces $m\geq 3$) such that 
$$\begin{array}{rcl}
T & = & mk+s\,,\\
T^2 & = & m(n-1)+\ell\,.
\end{array}$$
Then 
$$T^2=(mk+s)^2=m^2k^2+2mks+s^2\equiv s^2\equiv\ell\bmod m\,,$$
and since $\ell\not\equiv 0,1\mod m$, we must have
$s\not\equiv 0,\pm 1\bmod m$, so we have $2\leq s\leq m-2$.
Note that this cannot happen for $m\leq 3$.  So let us from
now on assume $m\geq 4$.  We show by induction that for each $n$
we will get a contradiction.

If $n=1$, then $T^2> m(n-1)+1=1$ implies $T\geq 2$, but
$T\in\Ss_m'(1)=\{\pm 1\}$, a contradiction.   If $n=2$, then
$T^2> m(n-1)+1=m+1\geq 5$, hence $T\geq 3$, but
$T\in\Ss_m'(2)=\{0,\pm 2\}$, a contradiction.

So suppose $n\geq 3$.
By assumption, $T\in \Ss_m'(n)$, so we can find $a_i\in\ZZ$, $1\leq i\leq m$,
such that
\begin{equation}\label{induct}
\begin{array}{rcl}
T & = & \sum_{i=1}^m(k+a_i)=mk+\sum_{i=1}^ma_i=mk+s\,,\\[1ex]
n & = & \sum_{i=1}^m(k+a_i)^2=mk^2+2k\sum_{i=1}^ma_i+
\sum_{i=1}^ma_i^2\,.
\end{array}
\end{equation}
From this, we get $\sum_{i=1}^ma_i=s\geq 2$ and therefore
$n\geq n'=\sum_{i=1}^ma_i^2\geq 2$ as well.  In particular,
$s\in \Ss_m(n')$.  Furthermore, 
$$m^2k^2+2mks+s^2=T^2=mn-m+\ell = m^2k^2+2mks+m\sum_{i=1}^ma_i^2
-m+\ell$$
which implies
$$s^2=m\sum_{i=1}^ma_i^2 -m+\ell=m(n'-1)+\ell$$
and thus $s\in \Ss_m(n')$ with $m(n'-1)+1<s^2<mn'$.
Now if $n'<n$, this leads to a contradiction by induction (where
$T$ is replaced by $s$).
If $n'=n$, then Eq.~\ref{induct} implies that necessarily
$k=0$ and thus $2\leq T=s\leq m-2$.
If $n\geq m-1$, then $T^2\leq (m-2)(n-1)=m(n-1)-2(n-1)<m(n-1)+\ell$,
a contradiction.  If $n\leq m-2$, then since $T\in\Ss_m'(n)$,
we have by Proposition \ref{easy}(iii) that $T\leq n$, hence
$$n^2\geq T^2\geq m(n-1)+2\geq (n+2)(n-1)+2=n^2+n>n^2\,,$$
again a contradiction.

\end{proof}

\begin{coro}\label{max-coro}
Let $m,n\in\NN$ and $T^*=T^*_m(n)$.
\begin{enumerate}
\item[(i)] ${T^*}^2=mn$ if and only if there exists an $a\in\NN$ with
$n=ma^2$.  In this case, if $m\geq 5$, then $\Ss_m(n)=\Ss_m(ma^2)$ is
full.   In particular, there exist infinitely many values $n\in\NN$
for which $\Ss_m(n)$ is full. 
\item[(ii)] If there does not exist an $a\in\NN$ with $n=ma^2$, then
${T^*}^2\leq m(n-1)+1$.
\end{enumerate}
\end{coro}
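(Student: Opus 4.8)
The plan is to deduce the whole corollary from the structural results already established, since the statement is essentially a repackaging of Propositions \ref{max} and \ref{bound} together with Corollary \ref{T-2cor}. No new idea is needed; the work is in tracking the definitions of $T^*_m(n)$, $\Ss_m'(n)$ and fullness.

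For the first assertion of (i), I would argue as follows. Every $T\in\Ss_m(n)$ satisfies $T^2\leq mn$ by condition (3) of Eq.~\ref{cond}, so ${T^*}^2=mn$ says precisely that the maximal element of $\Ss_m(n)$ attains the Cauchy-Schwarz bound, i.e.\ that there is some $T\in\Ss_m(n)$ with $T^2=mn$. By Proposition \ref{max} this occurs exactly when $n=ma^2$ for some $a\in\NN$, in which case $T=\pm ma$. Conversely, if $n=ma^2$ then $ma\in\Ss_m(n)$ with $(ma)^2=mn$, and since $ma=\sqrt{mn}$ is an upper bound for every element of $\Ss_m(n)$, it is the maximum; hence $T^*=ma$ and ${T^*}^2=mn$. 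This identification $T^*=ma$ (rather than merely $ma\in\Ss_m(n)$) is the one place where I must invoke the Cauchy-Schwarz upper bound, and it is the subtlest bookkeeping point in part (i).

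For the fullness claim, assume $m\geq 5$ and $n=ma^2$. Then $\sqrt{mn}=ma\in\NN$, so $\bigl[\sqrt{mn}\bigr]=ma=T^*$. I still need to check the parity hypothesis in Corollary \ref{T-2cor}: since $ma\in\Ss_m(n)$ we have $ma\equiv n\bmod 2$, but this is in any case automatic because $ma^2-ma=ma(a-1)$ is even. Thus $n\equiv\bigl[\sqrt{mn}\bigr]\bmod 2$ and $T^*_m(n)=\bigl[\sqrt{mn}\bigr]$, so Corollary \ref{T-2cor} gives that $\Ss_m(n)$ is full. The ``infinitely many'' remark then follows at once: for fixed $m\geq 5$ the values $n=ma^2$ with $a\in\NN$ are pairwise distinct and each yields a full $\Ss_m(n)$.

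Finally, for (ii) I would combine (i) with Proposition \ref{bound}. If no $a\in\NN$ satisfies $n=ma^2$, then by the equivalence just proved in (i) we have ${T^*}^2\neq mn$; since $T^*\in\Ss_m(n)$ forces ${T^*}^2\leq mn$, we obtain the strict inequality ${T^*}^2<mn$, i.e.\ $T^*\in\Ss_m'(n)$. Proposition \ref{bound} then yields ${T^*}^2\leq m(n-1)+1$ directly, and note this step is valid for all $m,n\in\NN$, so part (ii) carries no restriction on $m$. I do not expect a genuine obstacle anywhere: the only care required is the parity verification and the identification of the maximum in part (i), both of which are immediate from the earlier results.
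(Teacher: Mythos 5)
Your proposal is correct and follows exactly the route the paper intends: the corollary is stated without proof as an immediate consequence of Proposition \ref{max} (the Cauchy--Schwarz equality case), Corollary \ref{T-2cor} (fullness via $T^*_m(n)$), and Proposition \ref{bound} (the bound on $\Ss_m'(n)$), and your write-up fills in precisely those deductions, including the two genuine bookkeeping points (identifying $T^*=ma$ via the upper bound, and the parity check $ma\equiv ma^2\bmod 2$). No discrepancy with the paper's (implicit) argument.
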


\begin{remark}  Consider the situation $n=ma^2$ in (i) in the previous
corollary for $m\geq 5$, in which case $T^*_m(n)=T^*=ma$.  In this case,
the maximal value in $\Ss'_m(ma^2)$ is $ma-2$ by Proposition \ref{T-2}.
Now obviously 
$m(4a-1)\geq 3$ which implies 
that 
$$(ma-2)^2=m^2a^2-4ma+4\leq m^2a^2-m+1=m(n-1)+1$$
which of course would also follow from 
Proposition \ref{bound}.
\end{remark}

Having dealt with those $\Ss_m(n)$ where $n\leq m$ or $m\leq 7$ in 
Proposition \ref{easy} and Corollary \ref{m<8}, respectively,
we now focus on the case $n> m\geq 8$.  We need the
following technical lemma.

\begin{lemma}\label{goingdown}
Let $m,n,\ell,r\in\NN$, $s\in\NN_0$.  Suppose
that
\begin{equation}\label{goingdown-eq2}
\begin{array}{rcl}
m & > & \ell\,,\\
mn & > & r\,,\\
4\bigl((m-\ell)r-ms\bigr) & > & \ell m^2\,,
\end{array}
\end{equation}
and that
\begin{equation}\label{goingdown-eq1}
\{ T'\in\ZZ\,|\,T'\equiv n'\bmod 2,\ T'^2< (m-\ell)n'-s\}\subseteq
\Ss_{m-\ell}(n')
\end{equation}
for all $n'\in\NN$ of shape
$n'=n-\ell k^2$ for some $k\in\NN$.
Then
$$\{ T\in\ZZ\,|\,T\equiv n\bmod 2,\ T^2\leq mn-r\}\subseteq
\Ss_{m}'(n)\,.$$
\end{lemma}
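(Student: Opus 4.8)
The plan is to prove the containment by a descent on the number of squares: given an admissible $T$, I will represent $n$ as a sum of $m$ squares with prescribed linear sum $T$ by fixing $\ell$ of the coordinates equal to one well-chosen integer $k$ and invoking the hypothesis \eqref{goingdown-eq1} on the remaining $m-\ell$ coordinates. By the symmetry $T\in\Ss_m(n)\iff -T\in\Ss_m(n)$ I may assume $T\geq 0$, and since $T^2\leq mn-r<mn$ (using $mn>r$) it suffices to prove $T\in\Ss_m(n)$, the strict inequality $T^2<mn$ then placing $T$ in $\Ss_m'(n)$ automatically.

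So fix $T\geq 0$ with $T\equiv n\bmod 2$ and $T^2\leq mn-r$, and for an integer $k$ put $n'=n-\ell k^2$ and $T'=T-\ell k$. If $\ell$ coordinates are taken equal to $k$, then a representation of $n$ of the desired type exists as soon as $T'\in\Ss_{m-\ell}(n')$, because such coordinates contribute $\ell k^2$ to the sum of squares and $\ell k$ to the linear sum. The parity hypothesis $T'\equiv n'\bmod 2$ needed to apply \eqref{goingdown-eq1} is automatic, since $k\equiv k^2\bmod 2$ gives $T'=T-\ell k\equiv n-\ell k^2=n'\bmod 2$. Hence it is enough to produce an integer $k$ with $T'^2<(m-\ell)n'-s$, and I claim $k$ may be taken to be the nearest integer to $T/m$. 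A direct expansion shows that this inequality is equivalent to $f(k)<0$, where
\[
f(k)=\ell m\,k^2-2\ell T\,k+\bigl(T^2+s-(m-\ell)n\bigr),
\]
a parabola with positive leading coefficient, minimized at $k_0=T/m$ with
\[
f(k_0)=\tfrac{m-\ell}{m}\bigl(T^2-mn\bigr)+s\leq\tfrac{ms-(m-\ell)r}{m}.
\]

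Completing the square gives $f(k)=\ell m\,(k-k_0)^2+f(k_0)$, so for $k$ the nearest integer to $k_0$ one has $(k-k_0)^2\leq\tfrac14$ and therefore $f(k)\leq\tfrac{\ell m}{4}+\tfrac{ms-(m-\ell)r}{m}$; this is negative precisely under the third inequality of \eqref{goingdown-eq2}, namely $4\bigl((m-\ell)r-ms\bigr)>\ell m^2$. Thus $f(k)<0$. This not only yields $T'^2<(m-\ell)n'-s$, but, since the right-hand side then exceeds $T'^2+s\geq 0$ while $m-\ell\geq 1$, it forces $(m-\ell)n'>0$ and hence $n'\geq 1$; so $n'\in\NN$ is genuinely of the shape $n-\ell k^2$ required in \eqref{goingdown-eq1}. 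Applying \eqref{goingdown-eq1} to this $n'$ gives $T'\in\Ss_{m-\ell}(n')$, and assembling the representation yields $T\in\Ss_m(n)$.

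The one point that is not purely formal, and which I expect to be the main obstacle, is the requirement $k\in\NN$, i.e.\ $k\neq 0$: the nearest integer to $T/m$ is $0$ precisely when $0\leq T<m/2$, and for such small $T$ the estimate above only guarantees \emph{some} integer in the negativity interval $\{x:f(x)<0\}$ (whose length exceeds $1$), not a nonzero one. I would dispose of this small range separately via monotonicity together with fullness for small $m$: by Theorem~\ref{m<8}(iv) the set $\Ss_7(n)$ is full, so by Proposition~\ref{easy}(2) every $T$ with $T\equiv n\bmod 2$ and $T^2<7n$ already lies in $\Ss_m(n)$ for all $m\geq 7$. In the parameter range in which the lemma is applied this accounts for the troublesome $T$ with $|T|<m/2$; where it does not, one checks directly that a nonzero $k\in\{\pm1\}$ still lands in the negativity interval, so that the descent applies unchanged.
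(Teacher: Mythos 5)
Your main construction is exactly the paper's. The paper writes the requirement $T'^2<(m-\ell)n'-s$ as $P(k)<0$ for the same quadratic $P(X)=m\ell X^2-2T\ell X+T^2-n(m-\ell)+s$, and shows that under the third inequality of \eqref{goingdown-eq2} the interval between its two real roots has length $>1$, hence contains an integer $k$; your choice of $k$ as the nearest integer to $T/m$, combined with completing the square, is the same estimate in different clothing (the condition $f(k_0)+\ell m/4<0$ you use is precisely the condition that the root gap exceeds $1$). The parity check $T'\equiv n'\bmod 2$ and the observation that the inequality forces $n'>0$ also appear verbatim in the paper, so the body of your proof is correct and coincides with the paper's argument.

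The issue you flag at the end --- that \eqref{goingdown-eq1} is only assumed for $n'=n-\ell k^2$ with $k\in\NN$, while the descent may only produce $k=0$ --- is genuine, and in fact the paper's own proof silently ignores it: it finds $k\in\ZZ$ and invokes the hypothesis regardless, i.e.\ implicitly also for $n'=n$. But your patch does not close the gap. Appealing to ``the parameter range in which the lemma is applied'' is circular, since the lemma must be proved as stated, independently of its later use; and the fallback claim that some nonzero $k\in\{\pm 1\}$ always lies in the negativity interval is false. Indeed no patch can exist, because the lemma as literally stated is false: take $m=13$, $n=1$, $\ell=1$, $r=4$, $s=0$. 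Then $m>\ell$, $mn=13>4=r$, and $4\bigl((m-\ell)r-ms\bigr)=192>169=\ell m^2$, while \eqref{goingdown-eq1} is vacuously true (no $n'=1-k^2$ with $k\in\NN$ lies in $\NN$); yet $T=3$ satisfies $T\equiv n\bmod 2$ and $T^2=9\leq mn-r=9$, whereas $\Ss_{13}(1)=\{\pm 1\}$, so $3\notin\Ss_{13}'(1)$. (In this example $f(\pm 1)>0>f(0)$, so the negativity interval contains only $k=0$, which also refutes your $\{\pm1\}$ claim concretely.) The correct repair is to strengthen the shape condition to $k\in\NN_0$, i.e.\ to assume \eqref{goingdown-eq1} also for $n'=n$: then your $k=0$ case is immediate (apply the hypothesis to $T'=T$, $n'=n$ and append $\ell$ zeros), the rest of your argument goes through untouched, and this stronger hypothesis is exactly what is verified in every application in the paper, where fullness of $\Ss_{m-\ell}(n')$ is known for \emph{all} $n'\in\NN$ by Theorem \ref{m<8}(iv) or Theorem \ref{m=8-11}(i).
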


\begin{proof}
Let $T\in \NN$ with $T\equiv n\bmod 2$ and $T^2\leq mn-r$.
If we can find $k\in\ZZ$ such that for $T'=T-\ell k$ and $n'=n-\ell k^2$
we have
$T'^2<(m-\ell)n'-s$ (which necessarily forces $n'>0$), then by assumption
and since $k\equiv k^2\bmod 2$, we have $T'\equiv n'\bmod 2$ and
thus $T'\in \Ss_{m-\ell}(n')$, i.e., there exist
$a_i\in\ZZ$, $1\leq i\leq m-\ell$, such that
$$T'=\sum_{i=1}^{m-\ell}a_i\quad\mbox{and}\quad n'= \sum_{i=1}^{m-\ell}a_i^2$$
and therefore 
$$T=\sum_{i=1}^{m-\ell}a_i + \ell\times k\quad\mbox{and}\quad 
n=\sum_{i=1}^{m-\ell}a_i^2+ \ell\times k^2\,,$$
which in turn implies that $T\in\Ss_m(n)$.

Now $T'^2<(m-\ell)n'-s$ translates into
$$(T-\ell k)^2<(m-\ell)(n-\ell k^2)-s$$
or, equivalently,
$$m\ell k^2-2T\ell k+T^2-n(m-\ell)+s<0\,.$$
We can find a 
$k\in\ZZ$ that satisfies this inequality iff the polynomial 
$P(X)=m\ell X^2-2T\ell X+T^2-n(m-\ell)+s\in\RR[X]$ has two roots 
$\rho_1<\rho_2$ in 
$\RR$ and there is an integer $k$ in the open interval $]\rho_1,\rho_2[$.
Now the roots are
$$\rho_{1,2}=\frac{T\ell\pm\sqrt{(m-\ell)\ell (mn-T^2)-m\ell s}}{m\ell}\,,$$
and they are real and distinct if and only if
$(m-\ell)(mn-T^2)-ms>0$, in which case
we have $]\rho_1,\rho_2[\,\cap\,\ZZ\neq\emptyset$ if 
$\rho_2-\rho_1>1$, i.e.,
$$2\sqrt{(m-\ell)\ell (mn-T^2)- m\ell s}>m\ell\,.$$
Hence, the existence of real roots $\rho_1<\rho_2$ with 
$]\rho_1,\rho_2[\,\cap\,\ZZ\neq\emptyset$ follows from
\begin{equation}\label{find-k}
4((m-\ell)(mn-T^2)-ms)>\ell m^2\,.
\end{equation}
Now by assumption $r\leq mn-T^2$.  Hence, Eq.~\ref{find-k}
is certainly satisfied if
$$4((m-\ell)r-ms)>\ell m^2\,,$$
but this holds by our assumptions in Eq.~\ref{goingdown-eq2}. 
\end{proof}

\begin{theo}\label{m=8-11}
\begin{enumerate}
\item[(i)] $\Ss_8(n)$ is full for all $n\in\NN$.
\item[(ii)] Let $n>m\in\{ 9,10,11\}$.  Then
$$\Ss_m'(n)=\{ T\in\ZZ\,|\,T^2\leq m(n-1)+1,\ T\equiv n\bmod 2\}\,.$$
\end{enumerate}
\end{theo}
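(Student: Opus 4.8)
My plan is to deduce both parts from the single inclusion
$$\{ T\in\ZZ\,|\,T\equiv n\bmod 2,\ T^2\le m(n-1)+1\}\subseteq \Ss_m'(n)\,,$$
since the reverse inclusion is exactly Proposition \ref{bound} together with the fact that every $T\in\Ss_m(n)$ satisfies $T\equiv n\bmod 2$. The engine for this inclusion is the descent Lemma \ref{goingdown}: with parameters $(m,\ell,s,r)$ satisfying Eq.~\ref{goingdown-eq2} it yields $\{ T\equiv n\bmod 2,\ T^2\le mn-r\}\subseteq\Ss_m'(n)$, provided the descent hypothesis \ref{goingdown-eq1} holds at every $n'=n-\ell k^2$. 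I will always descend to $m-\ell\in\{7,8\}$, where $\Ss_{m-\ell}(n')$ is full for \emph{every} $n'$ (Theorem \ref{m<8}(iv) for $7$, and part (i) for $8$); fullness gives \ref{goingdown-eq1} with $s=0$.

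For part (i) I apply Lemma \ref{goingdown} with $(m,\ell,s,r)=(8,1,0,7)$, descending to the full $\Ss_7$. The inequalities in Eq.~\ref{goingdown-eq2} become $8>1$, $8n>7$ and $4\cdot 49=196>64$, all clear, so $\{ T\equiv n\bmod 2,\ T^2\le 8n-7\}\subseteq\Ss_8'(n)$, i.e.\ $\Ss_8'(n)=\{ T\equiv n\bmod 2,\ T^2\le 8(n-1)+1\}$. To upgrade this to fullness I must check that there is no $T\equiv n\bmod 2$ with $8n-7<T^2<8n$, so that this set equals $\TT_8(n)$. This is a short residue computation: $8n-T^2\equiv -T^2\bmod 8$, and since squares are $\equiv 0,1,4\bmod 8$ one gets $8n-T^2\equiv 0,4,7\bmod 8$; the only residue lying in $\{1,\dots,6\}$ is $4$, and $8n-T^2=4$ forces $n$ even (as $8n-T^2$ has the parity of $n$) while $u^2=2n-1$ with $u=T/2$ forces $n$ odd, a contradiction. (For $n\le 8$ one may instead quote Corollary \ref{small-n}.) Hence $\Ss_8'(n)=\TT_8(n)$ and $\Ss_8(n)$ is full.

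For part (ii) with $m\in\{9,10\}$ the same mechanism lands exactly on $r=m-1$. The choice $(9,2,0,8)$ descending to $\Ss_7$ needs $4\cdot 56=224>162$, and $(10,2,0,9)$ descending to the (now full) $\Ss_8$ needs $4\cdot 72=288>200$; both conclusions are precisely $\{ T\equiv n\bmod 2,\ T^2\le m(n-1)+1\}\subseteq\Ss_m'(n)$, which with Proposition \ref{bound} settles $m=9,10$.

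The case $m=11$ is the main obstacle, since one application of Lemma \ref{goingdown} cannot reach $r=m-1=10$: descending to a full $\Ss_{m-\ell}$ forces $\ell\ge 3$, and then the third inequality of Eq.~\ref{goingdown-eq2} with $s=0$ only permits $r\ge 12$. Using $(11,3,0,12)$ with descent to $\Ss_8$ (checking $4\cdot 96=384>363$) gives $\{ T\equiv n\bmod 2,\ T^2\le 11n-12\}\subseteq\Ss_{11}'(n)$, leaving the window $11n-12<T^2\le 11n-10$ uncovered. Here a parity check rescues the argument: $11n-T^2$ is even (it has the parity of $10n$), so $T^2=11n-11$ cannot occur and the only surviving value is $T^2=11n-10$. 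For that value $11\mid T^2+10$ forces $T\equiv\pm 1\bmod 11$, so I write $T=11c\pm 1$ and exhibit the explicit representation with ten entries equal to $c$ and one equal to $c\pm 1$: its sum is $11c\pm 1=T$ and its square-sum is $10c^2+(c\pm 1)^2=11c^2\pm 2c+1=(T^2+10)/11=n$, placing $T$ in $\Ss_{11}'(n)$. Together with the Lemma this covers all of $\{ T\equiv n\bmod 2,\ T^2\le 11(n-1)+1\}$, and Proposition \ref{bound} completes part (ii).
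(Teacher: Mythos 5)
Your proposal is correct and follows essentially the same route as the paper: the upper bound from Proposition \ref{bound}, the lower bound via Lemma \ref{goingdown} with descent ($s=0$) to the everywhere-full $\Ss_7$ or $\Ss_8$, and the same explicit representation (ten copies of $c$ plus one $c\pm 1$) to capture the boundary case $T^2=11n-10$ for $m=11$. The only deviations are parameter choices within the same method: for $m=8$ you take $r=7$ and close the window $8n-7<T^2<8n$ by a mod-$8$ argument, where the paper takes $r=4$ after observing that $8n-T^2\geq 4$ automatically; and for $m=9$ you descend with $\ell=2$ to $\Ss_7$ where the paper uses $\ell=1$ to $\Ss_8$.
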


\begin{proof}
We apply Lemma \ref{goingdown}.

(i) In the case $m=8$, we put $\ell=1$, $s=0$.  Note that if $T\in\TT_8(n)$
then $0<mn-T^2=8n-T^2$ and $n\equiv T\bmod 2$, from which
we conclude by working modulo $8$ that $mn-T^2\geq 4$.  So 
it suffices to show that we can
choose $r=4$ in the lemma in order to conclude fullness of $\Ss_8(n)$. 
Now $\Ss_7(n')$ is full for all $n'\in\NN$ by Theorem \ref{m<8},
therefore Eq.~\ref{goingdown-eq1} in the lemma is satisfied,
and so is Eq.~\ref{goingdown-eq2} because
$$112=4\bigl((m-\ell)r-ms\bigr)>\ell m^2=64\,.$$

(ii) In the case $m=9$, we choose $\ell=1$, and in the case
$m=10$ we choose  $\ell=2$.  In both cases we put $s=0$ and we
see that Eq.~ \ref{goingdown-eq1} in the lemma is satisfied
because of the fullness of $\Ss_8(n')$ that has been established in
part (i).  Since $m(n-1)+1=mn-(m-1)$, we put $r=m-1$.  We then have that
for $m=9$, $\ell=1$, $r=8$ and $s=0$, Eq.~\ref{goingdown-eq2}
is satisfied because
$$256= 4\bigl((m-\ell)r-ms\bigr)>\ell m^2=81\,,$$
and for $m=10$, $\ell=2$, $r=9$ and $s=0$, Eq.~\ref{goingdown-eq2}
is also satisfied because
$$288= 4\bigl((m-\ell)r-ms\bigr)>\ell m^2=200\,.$$
The lemma together with Proposition \ref{bound} then implies the result.

Let now $m=11$.  As before, in order to
determine $\Ss_{11}'(n)$ and because of
Proposition \ref{bound}, we only have to check for which $T\geq 0$ with
$T^2\leq m(n-1)+1=11n-10$ with $T\equiv n\bmod 2$, 
we have that $T\in \Ss_{11}'(n)$.  But then, for parity reasons,
we have either $T^2=11n-10$ or $T^2\leq 11n-12$.
Consider first the case $T^2=11n-10$. Then $T^2\equiv 1\bmod 11$ which
implies that $T\equiv\pm 1\bmod 11$, so this case can only
occur if there is a $k\in\NN$ with $T=11k\pm 1$ (note that $k\geq 1$ since
we assumed $n>m\geq 11$).  But then necessarily
$$T^2=11^2k^2\pm 2\cdot 11k+1=11n-10$$
and therefore $n=11k^2\pm 2k+1$, and we get the following representations
of $n$ by sums of $11$ squares:
$$\begin{array}{rcl}
n=11k^2+ 2k+1 = 10\times k^2 +(k+1)^2 & , & T=11k+1 =10\times k + (k+1)\,;\\
n=11k^2- 2k+1 = 10\times k^2 +(k-1)^2 & , & T=11k-1 =10\times k + (k-1)\,.
\end{array}$$
So indeed, if $T$ is such that $T^2=11n-10$, then $T\in\Ss_{11}'(n)$.

Finally, consider the case where $T^2\leq 11n-12$.  Here, we can
argue as in the cases $m=8,9,10$ but now with $r=12$, $\ell=3$, $s=0$
to conclude as before:
$$384=4\bigl((m-\ell)r-ms\bigr)>\ell m^2=363\,.$$
\end{proof} 

\begin{coro}\label{m=8-11-cor} 
Let $n\in\NN$, $n>m\in\{9,10,11\}$.
\begin{enumerate}
\item[(i)] $m=9$: $\Ss_9(n)$ is full iff $9n-2\notin\sos(1)$.
\item[(ii)] $m=10$:  $\Ss_{10}(n)$ is full iff either $n$ is odd and 
$10n-1,10n-5\notin\sos(1)$, or $n$ is even and $10n-4\notin\sos(1)$.
\item[(iii)] $m=11$:  $\Ss_{11}(n)$ is full iff 
$11n-2,11n-6,11n-8\notin\sos(1)$.
\end{enumerate}
\end{coro}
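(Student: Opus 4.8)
The plan is to turn fullness into the non-existence of a perfect square in a short interval and then settle that by quadratic residue considerations. Throughout, $n>m$, so Theorem~\ref{m=8-11}(ii) applies and gives $\Ss_m'(n)=\{T\in\ZZ\,|\,T^2\le m(n-1)+1,\ T\equiv n\bmod 2\}$. Since $m(n-1)+1=mn-m+1<mn$, the inclusion $\Ss_m'(n)\subseteq\TT_m(n)$ is automatic, so $\Ss_m(n)$ is full exactly when the reverse inclusion holds, i.e.\ when there is \emph{no} integer $T$ with $T\equiv n\bmod 2$ and $m(n-1)+1<T^2<mn$. Writing $T^2=mn-j$, the integers in the open interval $\bigl(m(n-1)+1,\,mn\bigr)$ correspond to $j\in\{1,\dots,m-2\}$; hence $\Ss_m(n)$ fails to be full precisely when $mn-j\in\sos(1)$ for some $j\in\{1,\dots,m-2\}$ for which the resulting $T$ has the right parity.

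Next I would dispose of the parity bookkeeping once and for all. If $T^2=mn-j$, then $T\equiv T^2\equiv mn-j\bmod 2$, so the condition $T\equiv n\bmod 2$ is equivalent to $mn-j\equiv n\bmod 2$. For $m\in\{9,11\}$ (odd) this forces $j$ even, independently of $n$; for $m=10$ it forces $j\equiv n\bmod 2$. Conversely, once $j$ has the correct parity, any square $mn-j$ automatically yields $T\equiv n\bmod 2$, and $1\le j\le m-2$ automatically places $T^2$ strictly between $m(n-1)+1$ and $mn$. Thus the whole problem reduces to: for which admissible $j$ can $mn-j$ be a perfect square? A value $mn-j$ that is never a square may be discarded, while any admissible $j$ with $mn-j$ a square witnesses failure of fullness. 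Note that I never need to exhibit an $n$ making a surviving value square; the biconditional holds term by term.

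For the odd moduli $m=9$ and $m=11$ a single quadratic residue condition settles everything, since $T^2=mn-j$ gives $T^2\equiv -j\bmod m$. For $m=9$ the admissible $j\in\{2,4,6\}$ give $-j\equiv 7,5,3\bmod 9$, and only $7$ is a square modulo $9$; hence only $j=2$ survives, so $\Ss_9(n)$ is full iff $9n-2\notin\sos(1)$, which is (i). For $m=11$ the admissible $j\in\{2,4,6,8\}$ give $-j\equiv 9,7,5,3\bmod 11$, and of these only $7$ is a nonresidue; hence $j=4$ is discarded and $j\in\{2,6,8\}$ survive, giving (iii).

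The case $m=10$ is the delicate one, and is where I expect the only real obstacle: since $10$ is not a prime power, no single modulus isolates the squares, and I would combine the conditions modulo $5$ and modulo $4$. For $n$ odd the admissible $j\in\{1,3,5,7\}$ satisfy $10n\equiv 2\bmod 4$, so $j=3,7$ give $T^2\equiv 3\bmod 4$, impossible, leaving $j=1,5$ and hence the conditions $10n-1,10n-5\notin\sos(1)$. For $n$ even one has $10n\equiv 0\bmod 4$, so $j=2,6$ give $T^2\equiv 2\bmod 4$, impossible, and $j=8$ gives $-8\equiv 2\bmod 5$, a nonresidue, leaving only $j=4$ and the condition $10n-4\notin\sos(1)$; this is (ii). The one point that genuinely needs the $2$-adic (mod $4$) input rather than the mod $5$ residue is $j=6$ with $n$ even: here $-6\equiv 4\bmod 5$ \emph{is} a residue, so mod $5$ is powerless, and one must instead use that $10n-6=2\times(\text{odd})$ for even $n$, equivalently $T^2\equiv 2\bmod 4$, to rule it out.
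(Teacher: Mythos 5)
Your proposal is correct and follows essentially the same route as the paper: both reduce failure of fullness, via Theorem~\ref{m=8-11} (and Proposition~\ref{bound}), to the existence of an integer $T$ with $T\equiv n\bmod 2$ and $m(n-1)+1<T^2<mn$, and then eliminate the impossible values $mn-j$ by parity together with quadratic residues modulo $m$ (the paper carries this out explicitly only for $m=11$, using $T^2\equiv 0,1,3,4,5,9\bmod{11}$). Your careful handling of $m=10$ via the combined mod~$4$ and mod~$5$ conditions is exactly the kind of ``similar argument'' the paper leaves to the reader, correctly worked out.
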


\begin{proof}  By Proposition \ref{bound} and Theorem \ref{m=8-11}, 
$\Ss_m(n)$ not being full 
is equivalent to the existence of
some $T\in\NN$ with $m(n-1)+1<T^2<mn$ and $T^2\equiv T\equiv n\bmod 2$.
For example, in (iii), this is equivalent to   the existence of 
some $T \in\NN$ with 
$11n-9\leq T^2\leq 11n-1$ and $T^2\equiv T\equiv n\equiv 11n\bmod 2$, and
since $T^2\equiv 0,1,3,4,5,9\bmod 11$, this is equivalent
to having some $T \in\NN$ with $T^2\in \{ 11n-2, 11n-6, 11n-8\}$.

(i) and (ii) can be shown by similar arguments, we leave the details to the
reader.
\end{proof}

\begin{example}\label{ex9}  Corollary \ref{m=8-11-cor}
states that if $n\geq 10$, then $\Ss_9(n)$  not being full
is equivalent to $9n-2$ being a square.
The smallest such $n$ is $19$:  $9\cdot 19-2=13^2$.
One could also easily check directly that $13\not\in \Ss_9(19)$.
But we also know by Theorem \ref{m=8-11}, that $11\in \Ss_9(19)$.
Indeed:
$$19=4\times 2^2 + 3\times 1^2=3^2+2^2+6\times 1^2\,.$$
We easily see that we get
all odd numbers $T$ between $-11$ and $11$ by suitably changing the signs 
of the coefficients
that are being squared in these representations.  Of course, this also
follows from Proposition~\ref{T-2}.
In particular, 
$$\Ss_9(19)=\{ T\in\ZZ\,|\,T\equiv 1\bmod 2,\ |T|\leq 11\}\,.$$ 
The above also shows that $\Ss_9(n)$ is full for all $9<n\leq 18$.
\end{example}

\begin{example} Similarly as before, we find that
if $n\geq 11$, then $\Ss_{10}(n)$  not being full is
equivalent to the existence of  
some $T\in\NN$ with $T\equiv n\bmod 2$ and $T^2\in \{10n-1,10n-4,10n-5\}$.
The smallest such $n$ is $17$: $10\cdot 17-1=13^2$.
Similarly as in the previous example, we know that $11\in \Ss_{10}(17)$, indeed:
$$17=3\times 2^2 + 5\times 1^2=3^2+8\times 1^2$$
and hence
$$\Ss_{10}(17)=\{ T\in\ZZ\,|\,T\equiv 1\bmod 2,\ |T|\leq 11\}\,.$$ 

The smallest such $n$ with $n$ even is $n=20$:  $10\cdot 20-4=14^2$.
We have that $12\in \Ss_{10}(20)$:
$$20=4\times 2^2 + 4\times 1^2 = 3^2 + 2^2 + 7\times 1^2$$
and hence
$$\Ss_{10}(20)=\{ T\in\ZZ\,|\,T\equiv 0\bmod 2,\ |T|\leq 12\}\,.$$ 
\end{example}

\begin{example}  Similarly as before, we find that the smallest
$n>11$ for which $\Ss_{11}(n)$ is not full is given by $n=18$:
$14^2=11\cdot 18-2$.
But we know that $12\in \Ss_{11}(18)$, indeed:
$18=3\times 2^2+6\times 1^2=3^2+9\times 1^2$.  Hence,
$$\Ss_{11}(18)=\{ T\in\ZZ\,|\,T\equiv 0\bmod 2,\ |T|\leq 12\}\,.$$ 
\end{example}

If $n\leq m$, then fullness of $\Ss_m(n)$ is dealt with in
Corollary \ref{small-n}.  In view of the above examples, 
if $n>m$ we still can expect fullness provided $n$ is `close' to $m$. 
The following corollary also explains the above examples in more
generality.

\begin{coro}\label{m>9} 
Let $n,m\in\NN$.
\begin{enumerate}
\item[(i)] If $10\leq n\leq 18$, then $\Ss_9(n)$ is full.
$\Ss_9(19)$ is not full.
\item[(ii)]  Let $m\geq 10$.  If $m< n\leq m+6$, 
then $\Ss_m(n)$ is full.   $\Ss_m(m+7)$ is not full.
\end{enumerate}
\end{coro}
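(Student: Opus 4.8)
The plan is to dispatch the two parts with the bookkeeping tools already in place, chiefly Propositions~\ref{max}, \ref{T-2} and \ref{bound}. For part~(i), since $9<n$ I would simply invoke Corollary~\ref{m=8-11-cor}(i), which says $\Ss_9(n)$ is full exactly when $9n-2\notin\sos(1)$. A direct check settles the stated range: for $10\leq n\leq 18$ the values $9n-2=88,97,106,115,124,133,142,151,160$ are all non-squares, whereas $9\cdot 19-2=169=13^2$, so $\Ss_9(n)$ is full for $10\leq n\leq 18$ and fails to be full at $n=19$.

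For the non-fullness half of part~(ii), set $n=m+7$ and test $T=m+3$. Its parity is correct, $T\equiv m+1\equiv n\bmod 2$, and because $m\geq 10$ one has
$$m(n-1)+1=m^2+6m+1<m^2+6m+9=T^2<m^2+7m=mn\,.$$
Hence $T\in\TT_m(n)$, while Proposition~\ref{bound} forbids $T\in\Ss_m'(n)$; therefore $\Ss_m(m+7)$ is not full. This is precisely where the hypothesis $m\geq 10$ is used, as the right-hand inequality $T^2<mn$ amounts to $9<m$.

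For the fullness half, which is the substance, write $n=m+j$ with $1\leq j\leq 6$ and put $c=\lceil j/3\rceil\in\{1,2\}$. The idea is to realise the largest admissible sum $T$ by an explicit representation using only the entries $0,1,2$:
$$n=c\cdot 2^2+(m+j-4c)\cdot 1^2+(3c-j)\cdot 0^2\,.$$
The three multiplicities sum to $m$ and are nonnegative for $m\geq 10$ (indeed $m+j-4c\geq m-7\geq 3$, and $3c-j\geq 0$ by the choice of $c$), so this is a genuine sum of $m$ squares with linear sum $T=m+j-2c=n-2c\in\Ss_m(n)$. I would then identify this $T$ with the largest integer of the same parity as $n$ whose square lies below $mn$, by establishing the sandwich $T^2<mn\leq (T+2)^2$.

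The heart of the matter is this sandwich, and it collapses to two elementary inequalities. Writing $m=n-j$, the lower bound $T^2=(n-2c)^2<(n-j)n=mn$ is equivalent to $(4c-j)n>4c^2$, and the upper bound $(T+2)^2=(n-2(c-1))^2\geq mn$ to $(j-4(c-1))n\geq -4(c-1)^2$; for $c\in\{1,2\}$, $j\leq 6$ and $n\geq 11$ both hold at once, uniformly across the six residues $j$. Once $T$ is known to be this maximal value, Proposition~\ref{max} excludes the possibility $n=ma^2$ (impossible since $0<j<3m$), so $T^*_m(n)=T$ with ${T^*_m(n)}^2<mn$; Proposition~\ref{T-2} then fills in every smaller value of the correct parity, giving $\Ss_m'(n)=\Ss_m(n)=\TT_m(n)$, i.e.\ fullness. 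The only point requiring attention is the uniform verification of the sandwich, but as indicated it reduces to the two displayed inequalities and presents no real obstacle.
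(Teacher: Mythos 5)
Your proposal is correct and follows essentially the same route as the paper: part (i) via Corollary~\ref{m=8-11-cor}(i), the non-fullness of $\Ss_m(m+7)$ via Proposition~\ref{bound} applied to $T=m+3$, and fullness for $m<n\leq m+6$ via the same explicit representations of $n$ using one or two $2$'s plus $1$'s and $0$'s, realizing the maximal admissible sum $n-2c$. The only difference is presentational: you unify the paper's two cases with $c=\lceil j/3\rceil$ and verify explicitly the sandwich inequalities that the paper merely asserts.
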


\begin{proof} (i) This follows from the arguments in Example \ref{ex9}.

(ii)  Write $T^*=T^*_m(n)$, $m< n\leq m+7$.  Under the assumptions, 
$n$ is not of shape $ma^2$ for some
$a\in\NN$.  Thus, for parity reasons, in order to have fullness, 
it is necessary and sufficient that if $c\in\NN$
with $c=n\bmod 2$,  $c^2<mn$, $(c+2)^2\geq mn$, then
$c\in\Ss_m(n)$, in which case $T^*=c$.

Let $n=m+k$.  For $1\leq k\leq 3$, the only $c$ satisfying
these conditions is
$c=n-2=m+k-2$.  But then we can write $n$ as a sum of $m-3+k\leq m$
squares as follows:
$$n=(m-4+k)\times 1^2 +2^2\quad\mbox{with}\quad 
c=m+k-2=(m-4+k)\times 1 + 2\,.$$
For $4\leq k\leq 6$,  the only $c$ satisfying these conditions is 
$c=n-4=m+k-4$.  But then we can write $n$ as a sum of $m-6+k\leq m$
squares as follows:
$$n=(m-8+k)\times 1^2+2\times 2^2
\quad\mbox{with}\quad c=m+k-4=(m-8+k)\times 1 + 2\times 2\,.$$
This shows that in all these cases, we have indeed $c\in\Ss_m(n)$,
implying fullness.

Now if $n=m+7$, then $c=m+3$ satisfies the above conditions.
But then $m(n-1)+1=m^2+6m+1<c^2=(m+3)^2$, so $c\not\in \Ss_m(m+7)$
by Proposition \ref{bound}, hence $\Ss_m(m+7)$ is not full.
\end{proof}

\begin{example}  For each $m\geq 9$, there exist infinitely many
$n> m$ for 
which $\Ss_m(n)$ is not full. To show this, we just have to find
$n$ and $T$ with $T\equiv n\bmod 2$ and $m(n-1)+1 < T^2 <mn$.

If $m\geq 10$, let $r\in\NN$ and put $T=2mr+3$ and $n=4mr^2+12r+1$.
Both $n$ and $T$ are odd, and we have
$$m(n-1)+1<T^2=4m^2r^2+12mr+9=m(n-1)+9<mn=m(n-1)+m\,.$$
Then $T\in\TT_m(n)$, but by Proposition~\ref{bound},
$T\not\in\Ss_m'(n)$.

If $m=9$, let $r\in\NN$ and put $T=18r-5$ and $n=36r^2-20r+3$. 
Both $n$ and $T$ are odd, and we get
$$9(n-1)+1=T^2-6<T^2<T^2+2=9n\,.$$
Again, $T\in\TT_m(n)$ but by Proposition~\ref{bound},
$T\not\in\Ss_m'(n)$.  Note that for $r=1$, we recover
the case $n=19$ and $T=13$ from Example \ref{ex9}. 
\end{example}

\begin{example} Let $n>m=12$ and let
$T\in\NN_0$ with $T\equiv n\bmod 2$.  Proposition \ref{bound} shows that if
$T^2<mn$, then a necessary condition for  
$T\in\Ss_{12}(n)$ is that $T^2\leq 12(n-1)+1$. However, this is in general
not sufficient.  Take $T=17$ and $n=25$.  Then $17^2=289=12\cdot 24+1$.
One easily checks that $17\not\in\Ss_{12}(25)$.  Indeed,
$\Ss_{12}(25)=\{ T\in\ZZ\,|\,T\equiv 1\bmod 2,\ |T|\leq 15\}$.
The value $15$ can be obtained from the representations
$$25= 5\times 2^2+5\times 1^2=3^2+2\times 2^2+8\times 1^2\,.$$
\end{example}

\begin{problem} If $m\geq 5$, then knowing $T^*_m(n)$ yields a
full description of $\Ss_m(n)$ by Proposition \ref{T-2}.  Thus,
we obtain the following rather natural problem:
Find an explicit description or formula for $T^*_m(n)$ in terms
of (properties of) $m$ and $n$.
\end{problem}

\section{Variations of the problem}
In this section we consider a variation of the original problem
concerning sums of four squares.  This problem has been studied by
by Z.-W.\ Sun et.\ al.\ in a series of papers
\cite{sun1}, \cite{sun2}, \cite{sun3}, \cite{sun4}.  The purpose
of this section is to show how our methods, in particular Mordell's
results on sums of squares of linear forms, can be used
to recover and extend some of the results by 
Z.-W.\ Sun et.\ al..
First, we generalize some of the problems they consider 
to $m$ squares for $m\in\NN$.  
Let $(a_1,\ldots,a_m)\in\ZZ^m\setminus\{ (0,\ldots,0)\}$, and $n\in\NN$.
This time, we ask for which $T\in\ZZ$ the following system of
diophantine equations has a solution $(x_1,\ldots,x_m)\in\ZZ^m$:
\begin{equation}\label{eq-sun}
\begin{array}{rcrcrcrcl}
a_1x_1 & + & a_2x_2 & + & \ldots & + & a_mx_m & = & T\\[1ex] 
x_1^2 & + & x_2^2 & + & \ldots & + & x_m^2 & = & n
\end{array}
\end{equation}
Note that the solvability of Eq.~\ref{eq-sun} with $x_i\in\ZZ$ 
is invariant under
sign changes of the $a_i$ (just change the signs of the 
corresponding $x_i$) and permutation of the indices, so that
we may assume from now on that
$$a_1\geq a_2\geq \ldots\geq a_m\geq 0,\ a_1\geq 1\,.$$ 
We define
$$A(x_1,x_2,\ldots,x_m)=\sum_{i=1}^ma_ix_i\in
\ZZ[x_1,x_2,\ldots,x_m]\quad\mbox{and}\quad a=\sum_{i=1}^ma_i^2\,.$$
In  analogy to before,  we define the set
$$\Ss_{m,A}(n)=\{ T\in\ZZ\,|\,\mbox{Eq.~\ref{eq-sun} has a solution
$\xv\in\ZZ^m$}\}\,.$$
\begin{prop}\label{sma-cs}
Let $n,m,a,A$ be as above, and let $T\in\ZZ$.  Let
$d=\gcd(a_1,\ldots,a_m)\in\NN$ and let
$a_i=da_i'$.  Put $a'=\sum_{i=1}^ma_i'^2$, so in particular
$a=d^2a'$.
\begin{enumerate}
\item[(i)]
If $T\in\Ss_{m,A}(n)$, then $T^2\leq an$.
\item[(ii)] $T\in\Ss_{m,A}(n)$ with  $T^2=an$ if and only if
there exists $b\in\NN$ with $n=a'b^2$, in which case 
$T\in\{\pm a'bd\}$.
\end{enumerate}
\end{prop}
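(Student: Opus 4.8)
The plan is to adapt the treatment of the extremal case $T^2=mn$ in Proposition~\ref{max}, replacing the all-ones vector $\ein$ by the weight vector $\av=(a_1,\ldots,a_m)$. Write $T=\av\cdot\xv$, where $\xv=(x_1,\ldots,x_m)\in\ZZ^m$ is a solution of Eq.~\ref{eq-sun}. For part~(i), the Cauchy--Schwarz inequality applied to the usual dot product gives
$$T^2=(\av\cdot\xv)^2\leq(\av\cdot\av)(\xv\cdot\xv)=an\,,$$
exactly as in Eq.~\ref{cs}; this reproves the necessary bound $T^2\leq an$ for the solvability of Eq.~\ref{eq-sun}.

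For part~(ii) I would analyze the equality case. Since $a_1\geq 1$ we have $\av\neq 0$, and since $n\in\NN$ we have $\xv\neq 0$. The equality $T^2=an$ means that equality holds in Cauchy--Schwarz, which forces $\xv$ and $\av$ to be linearly dependent, hence $\xv=\lambda\av$ for some $\lambda\in\RR$. Factoring out $d=\gcd(a_1,\ldots,a_m)$ and writing $\av=d\av'$ with $\av'=(a_1',\ldots,a_m')$, this becomes $\xv=\mu\av'$ with $\mu=\lambda d\in\RR$, i.e.\ $x_i=\mu a_i'$ for all $i$.

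The key step --- and the one place where the generalization from $\ein$ to a general $\av$ needs genuine input, rather than the automatic argument available in Proposition~\ref{max} --- is to show that the real scalar $\mu$ is in fact an integer. Here I would invoke that $\av'$ is primitive, $\gcd(a_1',\ldots,a_m')=1$: by B\'ezout there exist $u_i\in\ZZ$ with $\sum_{i=1}^m u_ia_i'=1$, whence
$$\mu=\mu\sum_{i=1}^m u_ia_i'=\sum_{i=1}^m u_i(\mu a_i')=\sum_{i=1}^m u_ix_i\in\ZZ\,.$$
Setting $b=\mu\in\ZZ$ (note $b\neq 0$, as $\xv\neq 0$), a direct substitution of $x_i=ba_i'$ into Eq.~\ref{eq-sun} yields $n=b^2\sum_i a_i'^2=a'b^2$ and $T=\sum_i(da_i')(ba_i')=da'b$, and then $T^2=d^2a'^2b^2=(d^2a')(a'b^2)=an$ confirms the extremal value. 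Replacing $b$ by $|b|$ (changing the signs of the $x_i$, and using that $\Ss_{m,A}(n)$ is symmetric under $T\mapsto -T$), we may take $b\in\NN$, giving $T\in\{\pm a'bd\}$. Conversely, if $n=a'b^2$ for some $b\in\NN$, then setting $x_i=ba_i'$ produces a solution of Eq.~\ref{eq-sun} with $T=a'bd$ and $T^2=an$, as just computed; symmetry yields $-a'bd$ as well. The only subtlety to double-check is the integrality of $\mu$ via B\'ezout above; everything else is routine substitution.
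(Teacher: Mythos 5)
Your proof is correct and follows essentially the same route as the paper: Cauchy--Schwarz for (i), and for (ii) the equality case forcing $\xv=\mu\av'$, with integrality of $\mu$ extracted from the primitivity of $\av'$. The only difference is that you spell out the B\'ezout argument for $\mu\in\ZZ$, which the paper asserts in one line (``since $\xv,\av'$ have coefficients in $\ZZ$ and $\gcd(a_1',\ldots,a_m')=1$''); this is a welcome clarification, not a different approach.
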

\begin{proof}  (i) follows from the Cauchy-Schwarz
inequality as in Eq.~\ref{cs} with $\ein$ replaced by
$\av=(a_1,\ldots,a_m)$ and $m$ replaced by $a$.

(ii) Put $\av'=(a_1',\ldots,a_m')$ so that $\av=d(a_1',\ldots,a_m')$.
If $n=a'b^2$, then $an=a'^2b^2d^2=(a'bd)^2$.   
For $\xv=\pm (ba_1',\ldots,ba_m')$ we then have 
$\xv\cdot\xv=n$ and  $A(\xv)=\av\cdot\xv=\pm a'bd\in \Ss_{m,A}(n)$.

Conversely, if $T\in\Ss_{m,A}(n)$ with  $T^2=an$, then
there exists $\xv\in\ZZ^m$ with $n=\xv\cdot\xv$ and 
$an=(\av\cdot\xv)^2$.  Since $\av\cdot\av =a$, we have
equality in the Cauchy-Schwarz inequality which implies that
$\xv$ depends linearly on $\av$ and hence on
$\av'$. Since $\xv,\av'$ have coefficients in $\ZZ$ and because
of $\gcd(a_1',\ldots,a_m')=1$, there exists $b\in\ZZ$ such that
$\xv=b\av'$, hence $n=\xv\cdot\xv=a'b^2$ and 
$T=A(\xv)=\av\cdot\xv=a'bd$.
\end{proof}
Similarly to what we did in Section 2, we define 
$$\Ss_{m,A}'(n)=\{ T\in\Ss_{m,A}(n)\,|\,T^2< an\}\,.$$
Note that
$T\in\Ss_{m,A}(n)$ if and only if
there exists $(x_1,\ldots,x_m)\in\ZZ^m$ such that
$$aX^2+2TXY+nY^2=\sum_{i=1}^m(a_iX+x_iY)^2\in\ZZ[X,Y]\,.$$
Thus, a necessary condition for $T\in\Ss_{m,A}(n)$ is that 
one can find $\alpha_i,\beta_i\in\ZZ$ so that the following 
equation holds:
\begin{equation}\label{eq-a}
aX^2+2TXY+nY^2=\sum_{i=1}^m(\alpha_iX+\beta_iY)^2\in\ZZ[X,Y]\,.
\end{equation}
Proposition \ref{obvious} and Theorem \ref{thm-mordell}
tell us exactly when this necessary condition is satisfied.
However, this condition is generally not sufficient.
But there are cases, where sufficiency also holds.
Note first that in Eq.~\ref{eq-a}, 
we may again assume (after permuting the summands
and changing signs if necessary) that $\alpha_1\geq\alpha_2
\geq\ldots\geq\alpha_m\geq 0$.   If $a=\sum_{i=1}^ma_i^2$
is essentially the only decomposition of $a$ into a sum of
$m$ squares, i.e., if for any other decomposition
$a=\sum_{i=1}^m\alpha_i^2$ with $\alpha_i\in\ZZ$ and
$\alpha_1\geq\alpha_2\geq\ldots\geq\alpha_m\geq 0$, we have 
$a_i=\alpha_i$, $1\leq i\leq m$, then the solvability of 
Eq.~\ref{eq-a} will be equivalent to the solvability of 
Eq.~\ref{eq-sun}.

This leads to the definition of the \emph{partition number}
$P_m(n)$ for $n,m\in\NN$:
$$P_m(n)=\bigl|\{ (a_1,\ldots,a_m)\in\NN_0^m\,|\,
a_1\geq a_2\geq\ldots\geq a_m,\ {\textstyle \sum_{i=1}^ma_i^2=n}\}\bigr|\,,$$
the number of essentially different partitions of $n$ into $m$ integer
squares. 
D.H.~Leh\-mer \cite{le} studied the question for which $n$ one has 
$P_m(n)=1$.  He
gave a full solution for $m\neq 3$, and he provided a conjecture for
$m=3$.  This conjecture has been later confirmed (albeit with a correction)
by Bateman and Grosswald \cite{bg}.  Their proof uses the classification
of discriminants of binary quadratic forms of class number $\leq 4$
which had been itself a conjecture at the time and which only later on
was fully established by Arno \cite{ar}.  Here is the complete result.
\begin{theo}\label{lehmer}
Let $n,m\in\NN$. 
\begin{enumerate}
\item[(1)] $P_1(n)=1$ iff $n\in\sos(1)$.
\item[(2)] $P_2(n)=1$ iff $n=2^kq^2,\ 2^kq^2p$ where $k\in\NN_0$,
$q\in\NN$ is an odd integer having only prime factors $\equiv 3\bmod 4$,
and $p$ is a prime with $p\equiv 1\bmod 4$.
\item[(3)] $P_3(n)=1$ iff $n=4^kc$ with
$k\in\NN_0$ and $c=1$, $2,\ 3,\ 5,\ 6$, $10,\ 11$, $13$, $14$, $19,\ 21$, 
$22,\ 30,\ 35,\  37$, $42$, $43,\ 46,\ 58,\ 67$, $70,\ 78$, $91$, 
$93,\ 115,\ 133$, $142$, $163$, $190,\ 235,\ 253,\ 403,\ 427$.
\item[(4)] $P_4(n)=1$ iff $n=1,\ 3,\ 5,\ 7,\ 11,\ 15,\ 23,\ 2\cdot4^k,
\ 6\cdot4^k,\ 14\cdot4^k$ with $k\in\NN_0$.
\item[(5)] $P_5(n)=1$ iff $n=1,\ 2,\ 3,\ 6,\ 7,\ 15$.
\item[(6)] Let $m\geq 6$.  Then $P_m(n)=1$ iff $n=1,\ 2,\ 3,\ 7$.
\end{enumerate}
\end{theo}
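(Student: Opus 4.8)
The plan is to determine, for each fixed $m$, exactly which $n$ admit only a single essentially different partition into $m$ squares. I would organize the proof around the elementary operation of \emph{padding with zeros}: if $(a_1,\ldots,a_{m})$ is a partition of $n$ into $m$ squares and $n$ is also writable as a sum of fewer than $m$ positive squares, then appending zeros gives partitions that may coincide or differ. The key structural observation is monotonicity: for $m\geq m'$, every partition of $n$ into $m'$ squares yields one into $m$ squares by adjoining $m-m'$ zeros, so $P_m(n)$ is governed both by genuine $m$-square partitions and by padded shorter ones. This immediately reduces the large-$m$ cases (5) and (6) to a finite check, since for $n$ large relative to $m$ there will be too many ways to distribute the squares.

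For the individual parts I would proceed as follows. Part (1) is immediate from the definition of $\sos(1)$. For part (2), the classical connection between $P_2(n)$ and the number of representations $r_2(n)$ by the Gaussian integers is the tool: $P_2(n)=1$ forces the representation to be essentially unique up to units and conjugation, and translating the formula for $r_2(n)$ (a product over primes $p\equiv 1\bmod 4$ of $(v_p(n)+1)$) into the condition that this count equals the minimal value pins down the stated shape $n=2^kq^2$ or $2^kq^2p$. For part (3), $P_3(n)=1$ is the deep case: I would \emph{cite} the Bateman--Grosswald result (resting on Arno's class-number-$4$ classification), since as the excerpt itself notes this is exactly where the genuinely hard input lies and is not provable by elementary means. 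The finite list of $33$ values of $c$ is precisely the output of that classification and should be quoted rather than rederived.

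For parts (4), (5), (6) I would argue more directly. The strategy is to show that once $n$ exceeds a small explicit bound, one can exhibit two genuinely different partitions, typically by trading a single term $2^2=4$ against $4\times 1^2$ (which changes the multiset of squares while keeping the count $m$ fixed once enough ones are available), or by splitting off a Lagrange-type four-square representation of a piece of $n$. This \emph{term-surgery} argument shows $P_m(n)\geq 2$ for all sufficiently large $n$, leaving only finitely many candidates, which are then checked by hand against the asserted lists. The bookkeeping must respect that we have exactly $m$ slots, so the surgery is available precisely when there are enough small entries to absorb the change.

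\textbf{Main obstacle.} The hard part is part (3): there is no elementary proof that the list of $33$ values is complete, because completeness is equivalent to an effective class number computation. For the remaining parts the genuine difficulty is purely combinatorial—choosing the right surgery move and verifying that it always applies beyond the finite exceptional range while never spuriously merging two partitions into one. I expect the careful tracking of when a $4 \leftrightarrow 1+1+1+1$ swap stays within $m$ summands (and remains a \emph{different} partition) to be the delicate point in (4)--(6), whereas (1) and (2) are routine given the Gaussian-integer representation formula.
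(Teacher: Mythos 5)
You should first be aware that the paper contains no proof of this theorem: it is quoted as a known result, with parts (1), (2), (4)--(6) attributed to Lehmer \cite{le} and part (3) to Bateman and Grosswald \cite{bg}, resting on Arno's class-number classification \cite{ar}. So your decision to cite part (3) rather than reprove it is exactly the paper's own stance, and your sketches for (1) (trivial) and (2) (via the classical representation count for two squares) are sound and follow Lehmer's route. In effect your proposal attempts strictly more than the paper does, so the real question is whether your elementary strategy for (4)--(6) can be carried out.

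For part (4) it cannot, and the failure is visible without any computation: you propose to show by term surgery that $P_4(n)\geq 2$ for all sufficiently large $n$, ``leaving only finitely many candidates, which are then checked by hand.'' But the statement you are proving asserts $P_4(n)=1$ along the three infinite families $n=2\cdot 4^k$, $6\cdot 4^k$, $14\cdot 4^k$, so there is no bound beyond which $P_4(n)\geq 2$, and no finite check can close the argument. The missing idea is a $4$-adic descent: if $n\equiv 0\bmod 8$, then in any representation $n=x_1^2+x_2^2+x_3^2+x_4^2$ all $x_i$ are even (work modulo $8$: odd squares are $\equiv 1$, even squares are $\equiv 0$ or $4$, so the number of odd $x_i$ must be zero), whence $P_4(4n')=P_4(n')$ for every even $n'$. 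It is this invariance that generates and controls the infinite families, and only after it is isolated can a surgery-plus-finite-check argument handle the residual cases. A second, smaller instance of the same bookkeeping problem sits in part (6): with zeros allowed in the definition of $P_m$, the number $7$ acquires a second partition $7=1^2+1^2+\cdots+1^2$ (seven ones) as soon as $m\geq 7$, so $P_m(7)=2$ for $m\geq 7$ and the ``check by hand against the asserted list'' cannot be done once and for all in $m$; indeed, carried out honestly, it exposes that the list $\{1,2,3,7\}$ is correct only for $m=6$ and must shrink to $\{1,2,3\}$ for $m\geq 7$. This is precisely the delicacy about padding with zeros that you flagged as the likely trouble spot, and it is where your write-up, as it stands, would break.
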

Back to our original problem.  Let
\begin{equation}\label{setup}
\begin{array}{cl}
\bullet & a_i\in\NN_0,\ 1\leq i\leq m,\ \mbox{with}\ 
0\neq a_1\geq a_2\geq\ldots\geq a_m,\\[1ex]
\bullet & A(x_1,\ldots,x_m)=\sum_{i=1}^ma_ix_i, \\[1ex]
\bullet & a=\sum_{i=1}^ma_i^2.
\end{array}
\end{equation}
Together with the preceding remarks, we now have established the
following. 
\begin{prop}\label{sma}
Let $n,m\in\NN$ and let $a_i,\ a,\ A$ be as in Eq.~\ref{setup}.
If $T\in\Ss_{m,A}(n)$ then there exist $\alpha_i,\beta_i\in\ZZ$, 
$1\leq i\leq m$ that satisfy
Eq.~\ref{eq-a}.  The converse holds if in addition $P_m(a)=1$.
\end{prop}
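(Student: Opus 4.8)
The plan is to establish both implications directly by expanding the right-hand side of Eq.~\ref{eq-a} and comparing coefficients, relying only on the bookkeeping already assembled before the statement together with the hypothesis $P_m(a)=1$ for the converse.

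For the forward implication I would argue as follows. If $T\in\Ss_{m,A}(n)$, fix a solution $\xv=(x_1,\ldots,x_m)\in\ZZ^m$ of Eq.~\ref{eq-sun} and set $\alpha_i=a_i$, $\beta_i=x_i$ for $1\leq i\leq m$. Expanding $\sum_{i=1}^m(a_iX+x_iY)^2$ and collecting monomials, the coefficient of $X^2$ is $\sum_i a_i^2=a$, the coefficient of $XY$ is $2\sum_i a_ix_i=2A(\xv)=2T$, and the coefficient of $Y^2$ is $\sum_i x_i^2=n$. Thus Eq.~\ref{eq-a} holds, and this direction uses no assumption on $P_m(a)$.

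For the converse I would assume $P_m(a)=1$ and a solution $\alpha_i,\beta_i\in\ZZ$ of Eq.~\ref{eq-a}. Comparing coefficients as above produces the three identities $\sum_i\alpha_i^2=a$, $\sum_i\alpha_i\beta_i=T$, and $\sum_i\beta_i^2=n$. The key observation is that replacing a pair $(\alpha_i,\beta_i)$ by $(-\alpha_i,-\beta_i)$, and permuting the pairs, leaves all three of these sums unchanged; after such normalizations I may therefore assume $\alpha_1\geq\alpha_2\geq\ldots\geq\alpha_m\geq 0$. Then $(\alpha_1,\ldots,\alpha_m)$ is one of the partitions of $a$ into $m$ squares counted by $P_m(a)$, and so is $(a_1,\ldots,a_m)$ by the conditions in Eq.~\ref{setup}. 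Since $P_m(a)=1$, the two must coincide, i.e.\ $\alpha_i=a_i$ for every $i$.

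To finish, set $x_i=\beta_i$; the surviving identities then read $\sum_i a_ix_i=\sum_i\alpha_i\beta_i=T$ and $\sum_i x_i^2=n$, so $(x_1,\ldots,x_m)$ solves Eq.~\ref{eq-sun} and hence $T\in\Ss_{m,A}(n)$. I expect no genuine obstacle: the only points needing care are verifying that the three coefficient sums are invariant under simultaneous sign changes and reorderings, and confirming that $P_m(a)=1$ really forces $(\alpha_i)=(a_i)$ once the $\alpha_i$ are arranged nonincreasingly — both of which are immediate from the definition of the partition number.
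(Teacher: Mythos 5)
Your proposal is correct and follows essentially the same route as the paper: the paper's proof (contained in the remarks preceding the proposition) likewise takes $\alpha_i=a_i$, $\beta_i=x_i$ for the forward direction, and for the converse normalizes the pairs $(\alpha_i,\beta_i)$ by sign changes and permutation so that $\alpha_1\geq\ldots\geq\alpha_m\geq 0$, then invokes $P_m(a)=1$ to force $\alpha_i=a_i$ and reads off the solution $x_i=\beta_i$ of Eq.~\ref{eq-sun}.
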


In \cite{sun1}, \cite{sun2}, \cite{sun3}, \cite{sun4}, the authors
study among other things the sets $\Ss_{4,A}(n)$ for certain linear polynomials
$A\in\ZZ[x_1,x_2,x_3,x_4]$ as above.  They give (partial) results on the existence
of certain types of elements contained in $\Ss_{4,A}(n)$.
The following corollary allows to easily
recover and to extend their results in that context.

\begin{coro}\label{coro-sun} Let $n\in\NN$ and let 
$a_i,\ a,\ A$ be as in Eq.~\ref{setup}
with $m=4$.  Assume $P_4(a)=1$.  Then 
$$\Ss_{4,A}'(n)=\{ T\in\ZZ\,|\,0<an-T^2\in \sos(3)\}\,.$$
If $\gcd (a_1,a_2,a_3,a_4)=d\in\NN$ and there exists $b\in\NN$
with $n=ab^2d^{-2}$, then
$\Ss_{4,A}(n)=\Ss_{4,A}'(n)\cup\{\pm abd^{-1}\}$.  Otherwise,
$\Ss_{4,A}(n)=\Ss_{4,A}'(n)$.
\end{coro}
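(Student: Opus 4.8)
The plan is to deduce Corollary~\ref{coro-sun} from Proposition~\ref{sma} together with the already-established description of $\Ss_4(\cdot)$ in Theorem~\ref{m<8}(iii), exploiting the hypothesis $P_4(a)=1$ to upgrade the necessary condition of Proposition~\ref{sma} into a sufficient one. First I would fix $n\in\NN$ and the data $a_i,a,A$ as in Eq.~\ref{setup} with $m=4$, and unwind what $T\in\Ss_{4,A}(n)$ means via Eq.~\ref{eq-a}: it is equivalent to the binary form $[a,T,n]=aX^2+2TXY+nY^2$ being a sum of four squares of integral linear forms. By Proposition~\ref{sma}, the hypothesis $P_4(a)=1$ makes this equivalence genuine (not just one direction), so $T\in\Ss_{4,A}(n)$ if and only if $[a,T,n]$ is a sum of four integral linear squares.

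The heart of the argument is then to apply Theorem~\ref{thm-mordell}(iv) to the form $[a,T,n]$. That result says $[a,T,n]$ is a sum of four squares of integral linear forms precisely when its determinant $\Delta(a,T,n)=an-T^2$ lies in $\sos(3)$, \emph{provided} $a,n,\Delta>0$. Restricting to $\Ss_{4,A}'(n)$, i.e. to those $T$ with $T^2<an$, guarantees $\Delta>0$, so Mordell's criterion applies cleanly and yields
$$\Ss_{4,A}'(n)=\{T\in\ZZ\,|\,0<an-T^2\in\sos(3)\}\,,$$
which is the first assertion. One subtlety I would flag here is the parity issue: in the original $m=4$ case (Theorem~\ref{m<8}(iii)) one needed the extra congruence $T\equiv n\bmod 2$, arising from the two essentially different partitions $(1,1,1,1)$ and $(2,0,0,0)$ of $4$ into four squares. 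In the present weighted setting the role of that congruence is played entirely by the uniqueness hypothesis $P_4(a)=1$, which forces the decomposition $a=\sum a_i^2$ to be essentially unique and thereby removes any spurious solution; this is exactly why no separate parity condition appears in the statement.

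Next I would handle the boundary case $T^2=an$ separately, since Mordell's theorem (iv) excludes $\Delta=0$. This is governed by Proposition~\ref{sma-cs}(ii): writing $d=\gcd(a_1,\ldots,a_4)$ and $a_i=da_i'$, $a'=\sum a_i'^2$, one has $T\in\Ss_{4,A}(n)$ with $T^2=an$ if and only if $n=a'b^2$ for some $b\in\NN$, in which case $T=\pm a'bd$. I would rewrite the condition $n=a'b^2$ in the form stated in the corollary: since $a=d^2a'$, we have $a'=ad^{-2}$, so $n=a'b^2=ab^2d^{-2}$ and the extremal value becomes $a'bd=ad^{-2}\cdot bd=abd^{-1}$, matching $\pm abd^{-1}$. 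Combining this with the description of $\Ss_{4,A}'(n)$ gives the two displayed cases: when such a $b$ exists, $\Ss_{4,A}(n)=\Ss_{4,A}'(n)\cup\{\pm abd^{-1}\}$, and otherwise $\Ss_{4,A}(n)=\Ss_{4,A}'(n)$.

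I expect the main obstacle to be purely bookkeeping rather than conceptual: verifying that $P_4(a)=1$ is precisely the hypothesis needed to make Proposition~\ref{sma}'s converse hold, and checking that the three positivity requirements ($a>0$, $n>0$, $\Delta>0$) of Theorem~\ref{thm-mordell}(iv) are all met on $\Ss_{4,A}'(n)$ so that the $\Delta=0$ case can be cleanly quarantined and dispatched via Proposition~\ref{sma-cs}(ii). The only real care is in the final algebraic translation between the two equivalent forms $a'bd$ and $abd^{-1}$ of the extremal value, which I would present in a single line as above.
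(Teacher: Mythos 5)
Your proposal is correct and takes essentially the same route as the paper, whose entire proof is the one-line remark that the corollary ``follows readily from Propositions \ref{sma-cs}, \ref{sma} together with Theorem \ref{thm-mordell}'' (the $m=4$ case of Mordell, mislabeled (iii) in the paper's enumeration); you simply spell out the same three ingredients: the $P_4(a)=1$ equivalence from Proposition \ref{sma}, Mordell's criterion $\Delta\in\sos(3)$ on the region $T^2<an$, and the boundary case $T^2=an$ via Proposition \ref{sma-cs}(ii) with the translation $a'bd=abd^{-1}$. Your side remark that the parity condition of Theorem \ref{m<8}(iii) disappears precisely because $P_4(a)=1$ rules out spurious decompositions like $(2,0,0,0)$ (which is why the classical case $a=4$, where $P_4(4)=2$, is not covered) is a correct observation that the paper leaves implicit.
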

\begin{proof}
This follows readily from Propositions \ref{sma-cs}, \ref{sma}
together with Theorem~\ref{thm-mordell}(iii).
\end{proof}

\begin{example}
Sun et.~al. studied polynomials of type $A(x_1,x_2,x_3,x_4)=\sum_{i=1}^4a_ix_i$
with $P_4\bigl(\sum_{i=1}^4a_i^2\bigr)=1$, for
example $(a_1,a_2,a_3,a_4)=(1,1,0,0)$ (\cite[Th.~1.2(i)]{sun1}, 
\cite[Th.~1.1(iii)]{sun3}), $(2,1,0,0)$ (\cite[Th.~1.2(ii)]{sun1}),
$(3,2,1,1)$ (\cite[Th.~1.5]{sun2}), $(3,1,1,0)$ (\cite[Th.~1.6(ii)]{sun2}), $(2,1,1,1)$ (\cite[Th.~1.7(ii)]{sun2}), $(3,2,1,0)$  (\cite[Th.~1.7(iv)]{sun2}),
$(2,1,1,0)$ (\cite[Th.~1.4(i)]{sun3}).  They prove various results of the type
that $\Ss_{4,A}(n)$ contains a square, or twice a square, or a cube, or
twice a cube, or a power of $4$, or a power of $8$, or the like.  We refrain
from presenting their results in detail.  Suffice it to say that
all these types of results for the above mentioned polynomials
can now readily be checked or recovered 
using our explicit and complete description of  $\Ss'_{4,A}(n)$ in all 
these cases by Corollary~\ref{coro-sun}.

Just as an illustration, let us consider the case $(3,1,1,0)$.  We will show that
$\Ss_{4,A}(n)$ always contains a power of $4$. By Corollary \ref{coro-sun},
it suffices to show that $11n-(4^{m})^2\in\sos (3)$ for some $m\in\NN_0$
to conclude that  $4^m\in \Ss_{4,A}(n)$.

Below is a list of such elements $T=4^m$, where we write
$11n=4^{2t+r}(8k+s)$ with $t,k\in\NN_0$, $r\in\{ 0,1\}$, 
$s\in\{ 1,2,3,5,6,7\}$.  Note that then $11$ must divide $8k+s$,
so $8k+s\geq 11$, hence $8k+s-2^\ell>0$ for $\ell\leq 3$, and
if $s\neq 3$ then $8k+s\geq 22$ in which case $8k+s-2^\ell>0$ for $\ell\leq 4$.
$$\begin{array}{ll|l} 
11n & & T\\\hline\hline
4^{2t}(8k+s)\rule{0ex}{2.3ex} & s\neq 1,5 & 4^{t}\\
 & s\neq 3,7 & 4^{t+1}\\\hline
4^{2t+1}(8k+s)\rule{0ex}{2.3ex} & s\neq 2,6 & 4^{t}\\
 & s\neq 3 & 4^{t+1}
\end{array}$$

For example, if $11n=4^{2t+1}(8k+s)$ with $s\neq 2,6$, then we have 
$11n-(4^t)^2=4^{2t}(32k+4s-1)$ with $32k+4s-1\equiv 3\bmod 8$, showing that
indeed $11n-(4^t)^2\in\sos (3)$.
\end{example}

\subsection*{Acknowledgment}  I am grateful to Professor Zhi-Wei Sun for 
pointing out an error in an earlier version of this article.

\end{document}